\def\tank#1{\protected@xdef\@thanks{\@thanks
 \protect\footnotetext[0]{#1}}}
\def\bigfoot{

 \@footnotetext}
\newcommand{\ea}{\end{array}}
\newtheorem{theorem}{Theorem}[section]
\newtheorem{lem}{Lemma}[section]
\newtheorem{prp}[theorem]{Proposition}
\newtheorem{thm}[theorem]{Theorem}
\newtheorem{cor}[theorem]{Corollary}
\newtheorem{dfn}[theorem]{Definition}
\newtheorem{remark}{Remark}
\def\beq{\begin{equation}}
\def\nneq{\end{equation}}
\def\bthm{\begin{thm}}
\def\nthm{\end{thm}}
\def\blem{\begin{lem}}
\def\nlem{\end{lem}}
\def\bprf{\begin{proof}}
\def\nprf{\end{proof}}
\def\bprop{\begin{prop}}
\def\nprop{\end{prop}}
\def\brmk{\begin{rem}}
\def\nrmk{\end{rem}}
\def\bexa{\begin{exa}}
\def\nexa{\end{exa}}
\def\bcor{\begin{cor}}
\def\ncor{\end{cor}}
\title{Exponential mixing for stochastic model of two-dimensional second grade fluids
}
\thanks{zhaijl@ustc.edu.cn},\ \
\thanks{Tusheng.Zhang@manchester.ac.uk}\\
\date{}
\newenvironment{proof}{\par\noindent{\bf Proof:}}{\hspace*{\fill}$\blacksquare$\par}
\begin{document}
\maketitle

\noindent{\bf Abstract:}
In this paper, we establish the exponential mixing property of stochastic models for the incompressible second grade
fluid. The general criterion established by Cyril Odasso \cite{Odasso} plays an important role.

\vspace{4mm}

\noindent \textbf{AMS Subject Classification}: Primary 60H15 Secondary 35R60, 37L55.

\vspace{3mm}
\noindent \textbf{Key Words:} Exponential mixing; Stochastic models for the incompressible second grade
fluid; Invariant measure; Markov process
\section{Introduction}

In this paper, we are concerned with the exponential mixing property of stochastic models for the incompressible second grade
fluid which is a particular class of Non-Newtonian fluid. Let $\mathcal{O}$ be a connected, bounded open
subset of $\mathbb{R}^2$ with boundary $\partial \mathcal{O}$ of class $\mathcal{C}^3$. We consider equation
\begin{eqnarray}\label{01}
&  &d(u -\alpha \triangle u )+\Big(-\nu \triangle u +curl(u -\alpha \triangle u )\times u +\nabla\mathfrak{P}\Big)dt\\\nonumber
& &=
\phi(u)dW,\ \ \ in\ \mathcal{O}\times(0,\infty),
\end{eqnarray}
under the following condition
\begin{eqnarray}\label{02}
\left\{
 \begin{array}{llll}
 & \hbox{${\rm{div}}\ u =0\ \text{in}\ \mathcal{O}\times(0,\infty)$;} \\
 & \hbox{$u =0\ \text{in}\ \partial \mathcal{O}\times[0,\infty)$;} \\
 & \hbox{$u (0)=u_0\ \text{in}\ \mathcal{O}$,}
 \end{array}
\right.
\end{eqnarray}
where $u =(u_1 ,u_2 )$ and $\mathfrak{P} $ represent the random velocity and modified pressure, respectively.
$W$ is a cylindrical Wiener process on a Hilbert space $U$ defined on a complete probability space $(\Omega,\mathcal{F},(\mathcal{F}_t)_{t\geq 0},P)$,
and $\phi(u)dW$ represents the external random force.

The interest in the investigation of the second grade fluids arises from the fact that it is an admissible
model of slow flow fluids, which contains a large class Non-Newtonian fluids such as industrial fluids, slurries, polymer melts, etc..
Furthermore, the second grade fluid has general and pleasant properties such as boundedness, stability, and exponential decay (see \cite{Dunn-Fosdick}).
It also has interesting connections with many other fluid models, see \cite{Iftimie, Busuioc, Busuioc-Ratiu, Shkoller2001, Shkoller1998, Holm-Marsden-Ratiu, Holm-Marsden-Ratiu01} and references therein. For example, it can be taken as a generalization of the Navier-Stokes Equation.
Indeed equation (\ref{01}) reduces to Navier-Stokes equation when $\alpha=0$. Furthermore, it was shown in \cite{Iftimie} that the second grade fluids models are good approximations of the Navier-Stokes equation.
Finally we refer to \cite{Noll-truesdell, Dunn-Fosdick, Dunn-Rajagopal, Fosdick-Rajagopal} for a comprehensive theory of the second grade fluids.

The stochastic model of two-dimensional second grade fluids (\ref{01}) has been recently studied in \cite{RS-10}, \cite{RS-12} and \cite{RS-10-01}, where the authors obtained the existence and uniqueness of solutions
and investigated the behavior of the solution as $\alpha\rightarrow0$. We mention that the martingale solution of
the system (\ref{01}) driven by L\'evy noise are studied in \cite{HRS}.

\vskip 0.3cm
In this paper, we establish the exponential mixing property of stochastic models for the incompressible second grade
fluid driven by multiplicative, but possibly degenerate noise. The exponential mixing characterizes the long time behaviour
of the solutions of the stochastic partial differential equations. More precisely, under reasonable conditions, we showed
the equation (\ref{01}) has a unique invariant measure, and the law of the solution converges to the invariant measure
exponentially fast. We will apply the criterion established in \cite{Odasso} by Cyril Odasso.
To this end, we need to prove the exponential integrability of certain energy functionals of the solutions, which is non-trival.

%a non additive noise, and the noise is allowed to be degenerate. The general criterion established  plays an important role.

This article is divided into four sections. In Section 2, we present some preliminaries. Section 3 is devoted to the formulation of the main result.
 The proof of our main result is given in Section 4.

\section{Preliminaries}

In this section, we will introduce some functional spaces and preliminary facts which will be used later.

Let $1\leq p<\infty$, and let $k$ be a nonnegative integer. We denote by $L^p(\mathcal{O})$
and $W^{k,p}(\mathcal{O})$ the usual $L^p$ and Sobolev spaces, and write $W^{k,2}(\mathcal{O})=H^k(\mathcal{O})$. Let $W^{k,p}_0(\mathcal{O})$ be
the closure in $W^{k,p}(\mathcal{O})$ of $\mathcal{C}^\infty_c(\mathcal{O})$ the space of infinitely differentiable functions with compact support in
$\mathcal{O}$. We denote $W^{k,2}_0(\mathcal{O})$ by $H_0^k(\mathcal{O})$. We endow the Hilbert space $H^1_0(\mathcal{O})$
with the scalar product
\begin{eqnarray}\label{H-01}
((u,v))=\int_\mathcal{O}\nabla u\cdot\nabla vdx=\sum_{i=1}^2\int_\mathcal{O}\frac{\partial u}{\partial x_i}\frac{\partial v}{\partial x_i}dx,
\end{eqnarray}
where $\nabla$ is the gradient operator. The norm $\|\cdot\|$ generated by this scalar product is equivalent to the usual norm of $W^{1,2}(\mathcal{O})$
in $H^1_0(\mathcal{O})$.

In what follows, we denote by $\mathbb{X}$ the space of $\mathbb{R}^2$-valued functions such that each component belongs to $X$. We introduce the spaces
\begin{eqnarray}\label{SP-01}
\mathcal{C}=\Big\{u\in[\mathcal{C}^\infty_c(\mathcal{O})]^2\ {\rm such \ that\  div}\ u=0\Big\},\nonumber\\
\mathbb{V}={\rm\ closure\ of}\ \mathcal{C} {\rm\ in}\ \mathbb{H}^1(\mathcal{O}),\\
\mathbb{H}={\rm\ closure\ of}\ \mathcal{C}\ {\rm in}\ \mathbb{L}^2(\mathcal{O}).\nonumber
\end{eqnarray}
We denote by $(\cdot,\cdot)$ and $|\cdot|$ the inner product and the norm induced by the inner product and the norm
in $\mathbb{L}^2(\mathcal{O})$ on $\mathbb{H}$, respectively. The inner product and the norm of $\mathbb{H}^1_0(\mathcal{O})$
are denoted respectively by $((\cdot,\cdot))$ and $\|\cdot\|$. We endow the space $\mathbb{V}$ with the norm generated
by the following scalar product
$$
(u,v)_\mathbb{V}=(u,v)+\alpha ((u,v)),\ \text{for any } v\in\mathbb{V};
$$
which is equivalent to $\|\cdot\|$, more precisely, we have
\begin{eqnarray}\label{Eq 01}
(\mathcal{P}^2+\alpha)^{-1}\|v\|^2_\mathbb{V}
\leq
\|v\|^2
\leq
\alpha^{-1}\|v\|^2_\mathbb{V},\ \ for\ any\ v\in\mathbb{V},
\end{eqnarray}
where $\mathcal{P}$ is the constant from Poincar\'e's inequality.

We also introduce the following space
\begin{eqnarray*}
\mathbb{W}=\{u\in\mathbb{V}\ \text{such that }curl (u-\alpha\triangle u)\in L^2(\mathcal{O})\},
\end{eqnarray*}
and endow it with the norm generated by the scalar product
\begin{eqnarray}\label{W}
(u,v)_\mathbb{W}=(u,v)_\mathbb{V}+\Big(curl(u-\alpha\triangle u),curl(v-\alpha\triangle v)\Big).
\end{eqnarray}
The following result states that the norm induced by $(\cdot,\cdot)_\mathbb{W}$ is equivalent to the usual $\mathbb{H}^3(\mathcal{O})$-norm on $\mathbb{W}$. This result can be found
in \cite{CE}, \cite{CG} and Lemma 2.1 in \cite{RS-12}.

\begin{lem}%\cite{RS-12}
Set
%\begin{eqnarray}\label{W-01}
$
\widetilde{\mathbb{W}}=\Big\{v\in\mathbb{H}^3(\mathcal{O})\text{ such that }{\rm div} v=0\ and\ v|_{\partial \mathcal{O}}=0\Big\}.
$
%\end{eqnarray}
Then the following (algebraic and topological) identity holds:
\begin{eqnarray}\label{W = W}
\mathbb{W}=\widetilde{\mathbb{W}}.
\end{eqnarray}
Moreover, there is a positive constant $C$ such that
\begin{eqnarray}\label{W-02}
    \|v\|^2_{\mathbb{H}^3(\mathcal{O})}
\leq
    C\Big(\|v\|^2_\mathbb{V}+|curl(v-\alpha \triangle v)|^2\Big),
\end{eqnarray}
for any $v\in \widetilde{\mathbb{W}}$.
\end{lem}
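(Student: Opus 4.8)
The plan is to establish the two inclusions in \eqref{W = W} separately; the quantitative estimate \eqref{W-02}, and with it the equivalence of the two norms on $\mathbb{W}$, is then read off from the argument. The inclusion $\widetilde{\mathbb{W}}\subseteq\mathbb{W}$ is immediate: if $v\in\mathbb{H}^3(\mathcal{O})$ with ${\rm div}\,v=0$ and $v|_{\partial\mathcal{O}}=0$, then $v\in\mathbb{H}^1_0(\mathcal{O})$ is divergence free, hence $v\in\mathbb{V}$, while $\triangle v\in\mathbb{H}^1(\mathcal{O})$ forces $v-\alpha\triangle v\in\mathbb{H}^1(\mathcal{O})$ and therefore $curl(v-\alpha\triangle v)\in L^2(\mathcal{O})$; keeping track of the constants in this computation also gives $\|v\|^2_\mathbb{V}+|curl(v-\alpha\triangle v)|^2\leq C\|v\|^2_{\mathbb{H}^3(\mathcal{O})}$, the converse of \eqref{W-02}.

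For the non-trivial inclusion $\mathbb{W}\subseteq\widetilde{\mathbb{W}}$, fix $v\in\mathbb{W}$. Since $v\in\mathbb{H}^1_0(\mathcal{O})$ is divergence free, the two-dimensional stream function representation provides $\psi\in H^2(\mathcal{O})$ with $v=\nabla^\perp\psi:=(\partial_2\psi,-\partial_1\psi)$, and $v|_{\partial\mathcal{O}}=0$ forces both the tangential and the normal derivatives of $\psi$ to vanish on $\partial\mathcal{O}$, so that $\psi\in H^2_0(\mathcal{O})$ (when $\mathcal{O}$ is not simply connected one first subtracts a fixed smooth function carrying the finitely many constant boundary values of $\psi$ on the components of $\partial\mathcal{O}$, which affects none of what follows). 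Since $curl$ commutes with $\triangle$ and $curl\,v=-\triangle\psi$,
\[
q:=curl(v-\alpha\triangle v)=\alpha\triangle^2\psi-\triangle\psi\in L^2(\mathcal{O}),
\]
and, as $\psi\in H^2(\mathcal{O})$ implies $\triangle\psi\in L^2(\mathcal{O})$, this says that $\psi$ solves the clamped biharmonic problem $\alpha\triangle^2\psi=q+\triangle\psi\in L^2(\mathcal{O})$ with $\psi\in H^2_0(\mathcal{O})$.

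Appealing to the elliptic regularity theory for the clamped biharmonic operator --- the point at which the $\mathcal{C}^3$ regularity of $\partial\mathcal{O}$ is used; see \cite{CE}, \cite{CG} --- we obtain $\psi\in H^4(\mathcal{O})$ together with $\|\psi\|_{H^4(\mathcal{O})}\leq C|\triangle^2\psi|$. Consequently $v=\nabla^\perp\psi\in\mathbb{H}^3(\mathcal{O})$, and since ${\rm div}\,\nabla^\perp\psi\equiv0$ while $\psi\in H^2_0(\mathcal{O})$ gives $v|_{\partial\mathcal{O}}=0$, we conclude $v\in\widetilde{\mathbb{W}}$; combined with the first part this proves \eqref{W = W}. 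The estimate \eqref{W-02} then follows by bookkeeping: $|\triangle^2\psi|=\alpha^{-1}|q+\triangle\psi|\leq\alpha^{-1}\big(|curl(v-\alpha\triangle v)|+|curl\,v|\big)$ with $|curl\,v|\leq C\|v\|\leq C\|v\|_\mathbb{V}$ by \eqref{Eq 01}, whence, using $\|v\|_{\mathbb{H}^3(\mathcal{O})}\leq C\|\psi\|_{H^4(\mathcal{O})}\leq C|\triangle^2\psi|$ and squaring, $\|v\|^2_{\mathbb{H}^3(\mathcal{O})}\leq C(\|v\|^2_\mathbb{V}+|curl(v-\alpha\triangle v)|^2)$.

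The step I expect to be the main obstacle is this elliptic regularity input: one must verify that the mere $\mathcal{C}^3$ smoothness of $\partial\mathcal{O}$ suffices to pass from $\triangle^2\psi\in L^2(\mathcal{O})$, $\psi\in H^2_0(\mathcal{O})$ to $\psi\in H^4(\mathcal{O})$ together with the quantitative bound (equivalently, to run the divergence-curl regularity estimate for $v$ directly), and to treat cleanly the multiply connected geometry, where the stream function need not lie in $H^2_0(\mathcal{O})$. These are exactly the technical points carried out in \cite{CE}, \cite{CG} and in Lemma 2.1 of \cite{RS-12}.
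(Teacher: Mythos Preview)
The paper does not prove this lemma at all: it simply states the result and refers the reader to \cite{CE}, \cite{CG} and Lemma~2.1 of \cite{RS-12}. Your proposal is therefore not competing with a proof in the paper but rather sketching the argument that lies behind those citations, and your stream-function/clamped-biharmonic approach is exactly the standard route taken in \cite{CE} and \cite{CG}. In that sense your write-up is consistent with, and more detailed than, what the paper offers; the only caveat is the one you already flag, namely that the $H^4$ regularity and a priori estimate for the clamped biharmonic problem under mere $\mathcal{C}^3$ boundary regularity (and the handling of multiply connected $\mathcal{O}$) are precisely the technical inputs imported from those references rather than proved from scratch.
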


%By this lemma we can endow the space $\mathbb{W}$ with norm $|\cdot|_\mathbb{W}$ which is generated by the scalar product
%(\ref{W}).

From now on, we identify the space $\mathbb{V}$ with its dual space $\mathbb{V}^*$ via the Riesz representation, and we have the
Gelfand triple
\begin{eqnarray}\label{Gelfand}
\mathbb{W}\subset \mathbb{V}\subset\mathbb{W}^*.
\end{eqnarray}
 We denote by $\langle f,v\rangle$ the
action of any element $f$ of $\mathbb{W}^*$ on an element $v\in\mathbb{W}$. It is easy to see
$$(v,w)_\mathbb{V}=\langle v,w\rangle,\ \ \ \forall v\in\mathbb{V},\ \ \forall w\in\mathbb{W}.$$

Note that the injection of $\mathbb{W}$ into $\mathbb{V}$ is compact. Thus,
there exists a sequence $\{e_i:i=1,2,3,\cdots\}$ of elements of $\mathbb{W}$ which forms an orthonormal basis in $\mathbb{W}$.
The elements of this sequence are the solutions of the eigenvalue problem
\begin{eqnarray}\label{Basis}
(v,e_i)_{\mathbb{W}}=\lambda_i(v,e_i)_{\mathbb{V}},\ \text{for any }v\in\mathbb{W}.
\end{eqnarray}
Here $\{\lambda_i:i=1,2,3,\cdots\}$ is an increasing sequence of positive eigenvalues. We have the following important result from
\cite{CG} about the regularity of the functions $e_i,\ i=1,2,3,\cdots.$

\begin{lem}\label{lem Basis}
Let $\mathcal{O}$ be a bounded, simply-connected open subset of $\mathbb{R}^2$ with a boundary of class $\mathcal{C}^3$, then
the eigenfunctions of (\ref{Basis}) belong to $\mathbb{H}^4(\mathcal{O})$.
\end{lem}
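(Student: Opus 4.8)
The plan is to start from the eigenvalue relation \eqref{Basis} and reinterpret it as an elliptic boundary value problem, then bootstrap regularity using the $\mathcal{C}^3$ smoothness of $\partial\mathcal{O}$. Fix $i$ and write $e=e_i$, $\lambda=\lambda_i$. Since $e\in\mathbb{W}=\widetilde{\mathbb{W}}$ by the previous lemma, we already know $e\in\mathbb{H}^3(\mathcal{O})$, $\mathrm{div}\,e=0$, $e|_{\partial\mathcal{O}}=0$, and $\mathrm{curl}(e-\alpha\triangle e)\in L^2(\mathcal{O})$. Expanding the scalar products in \eqref{Basis}, for every $v\in\mathbb{W}$ we have
\begin{eqnarray*}
(e,v)+\alpha((e,v))+\big(\mathrm{curl}(e-\alpha\triangle e),\mathrm{curl}(v-\alpha\triangle v)\big)
=\lambda\big[(e,v)+\alpha((e,v))\big].
\end{eqnarray*}
First I would test this against $v\in\mathcal{C}$, integrate by parts, and use the Helmholtz–Leray decomposition to convert the identity into a PDE: there is a pressure-type scalar field $q$ such that $e-\alpha\triangle e$ satisfies an elliptic system whose right-hand side involves $\mathrm{curl}(e-\alpha\triangle e)$, which lies in $L^2(\mathcal{O})$. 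The key structural point is that $w:=\mathrm{curl}(e-\alpha\triangle e)$ is a scalar function on the $2$D domain, and reading the variational identity carefully shows that $w$ itself solves a second-order elliptic equation (essentially $w-\lambda$-weighted corrections) with $L^2$ data and appropriate boundary behaviour; hence $w\in H^1(\mathcal{O})$ by interior-plus-boundary elliptic regularity, using that $\partial\mathcal{O}\in\mathcal{C}^3$.

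Next I would feed this improved information back into the relation $e-\alpha\triangle e=:z$. Knowing $\mathrm{curl}\,z\in H^1$ and $\mathrm{div}\,z=0$ (which follows from $\mathrm{div}\,e=0$ and commuting div with the Laplacian, together with the boundary conditions), standard div–curl elliptic estimates on a $\mathcal{C}^3$ domain give $z\in\mathbb{H}^2(\mathcal{O})$. Then $e$ solves the Stokes-type problem $e-\alpha\triangle e=z$ with $z\in\mathbb{H}^2$, $e|_{\partial\mathcal{O}}=0$, and elliptic regularity for this zeroth-plus-second order operator on a $\mathcal{C}^3$ domain upgrades $e$ from $\mathbb{H}^3$ to $\mathbb{H}^4(\mathcal{O})$. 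This is exactly the claimed conclusion; the simple connectedness of $\mathcal{O}$ is used to make the curl/stream-function arguments and the div–curl system well posed (no cohomological obstruction).

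The main obstacle I anticipate is the careful treatment of boundary conditions at each bootstrap stage: the natural boundary condition inherited by $w=\mathrm{curl}(e-\alpha\triangle e)$ from the variational formulation \eqref{Basis} is not simply Dirichlet or Neumann but a mixed/Navier-type condition coming from the test functions in $\mathbb{W}$, and one must identify it precisely to invoke the right elliptic estimate up to the boundary. A secondary technical point is justifying the integration by parts that produces the PDE — this requires $e\in\mathbb{H}^3$ (already available) and density of $\mathcal{C}$ in $\mathbb{W}$, and care that the pressure term $\nabla q$ is well defined in $L^2$. Once the correct boundary operator for $w$ is pinned down, the rest is a routine two-step elliptic bootstrap of the type carried out in \cite{CG}, to which I would refer for the finer details of the constants and the $\mathcal{C}^3$ regularity hypotheses.
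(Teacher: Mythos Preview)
The paper does not actually prove this lemma: it is quoted verbatim as ``the following important result from \cite{CG}'' (Cioranescu--Girault) with no argument supplied. So there is no in-paper proof to compare your proposal against; any proof you give is already more than what the paper does.

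Your sketch is in the spirit of the argument in \cite{CG}: read the variational eigenvalue relation \eqref{Basis} as an elliptic boundary value problem, gain one derivative on the scalar vorticity $w=\mathrm{curl}(e-\alpha\triangle e)$, feed this back through a div--curl/Stokes-type system, and upgrade $e$ from $\mathbb{H}^3$ to $\mathbb{H}^4$. You have also correctly flagged the genuine difficulty, namely identifying the boundary condition inherited by $w$ from testing against $v\in\mathbb{W}$; in \cite{CG} this is handled by passing to the stream function (using simple connectedness) rather than working directly with $w$, which makes the boundary conditions cleaner. If you want to flesh your proposal out further, that reformulation via the stream function is the most direct route; otherwise, since the paper itself is content to cite \cite{CG}, simply invoking that reference is entirely in keeping with the paper's treatment.
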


 Consider the following ``generalized Stokes equations":

\begin{eqnarray}\label{General Stokes}
v-\alpha \triangle v=f\ {\rm in}\ \mathcal{O},\nonumber\\
{\rm div}\ v=0\ {\rm in}\ \mathcal{O},\\
v=0\ {\rm on}\ \partial \mathcal{O}.\nonumber
\end{eqnarray}
%Let us stating an important lemma on solvability of (\ref{General Stokes}).
%Its proof can be derived from an adaptation of the results obtained by Solonnikov in \cite{SO1}, \cite{SO2},
%and also can be found in Theorem 2.5 of \cite{RS-10} and Theorem 2.2 of \cite{RS-12}.

The following result can be found in \cite{SO1}, \cite{SO2}, Theorem 2.5 of \cite{RS-10} and Theorem 2.2 of \cite{RS-12}.
\begin{lem}\label{Lem GS}
Let $\mathcal{O}$ be a connected, bounded open subset of $\mathbb{R}^2$ with boundary $\partial \mathcal{O}$ of class $\mathcal{C}^l$
and let $f$ be a function in $\mathbb{H}^l$, $l\geq 1$. Then the system (\ref{General Stokes}) admits a solution $v\in \mathbb{H}^{l+2}\cap\mathbb{V}$.
Moreover if $f$ is an element of $\mathbb{H}$, then $v$ is unique and the following relations hold
\begin{eqnarray}\label{Eq GS-01}
(v,g)_\mathbb{V}=(f,g),\ \text{for any }g\in \mathbb{V},
\end{eqnarray}
and
\begin{eqnarray}\label{Eq GS-02}
\|v\|_{\mathbb{W}}\leq K\|f\|_\mathbb{V}.
\end{eqnarray}

\end{lem}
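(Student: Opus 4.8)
The plan is to obtain existence from a Lax--Milgram (Riesz) argument in $\mathbb{V}$, to read off uniqueness and the identity (\ref{Eq GS-01}) from it, and then to recover the pointwise equation together with all its regularity from the elliptic theory for the Stokes system, supplemented by the identification $\mathbb{W}=\widetilde{\mathbb{W}}$ of (\ref{W = W}).

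First I would treat $f\in\mathbb{H}$. On the Hilbert space $\bigl(\mathbb{V},(\cdot,\cdot)_\mathbb{V}\bigr)$ the functional $g\mapsto(f,g)$ is bounded, since $|(f,g)|\le|f|\,|g|\le|f|\,\|g\|_\mathbb{V}$; hence the Riesz representation theorem yields a unique $v\in\mathbb{V}$ with $(v,g)_\mathbb{V}=(f,g)$ for every $g\in\mathbb{V}$, which is (\ref{Eq GS-01}), and testing with $g=v$ (using $|v|\le\|v\|_\mathbb{V}$) gives $\|v\|_\mathbb{V}\le|f|$. For uniqueness, note conversely that any solution of (\ref{General Stokes}) satisfies (\ref{Eq GS-01}): since the system is posed with ${\rm div}\,v=0$ and $v|_{\partial\mathcal{O}}=0$, the equation $v-\alpha\triangle v=f$ carries an implicit pressure gradient, and pairing it with $g\in\mathbb{V}$ and integrating by parts gives $-\alpha(\triangle v,g)=\alpha((v,g))$ (the boundary integral drops because $g|_{\partial\mathcal{O}}=0$), while the pressure term pairs trivially because ${\rm div}\,g=0$; therefore the difference $w$ of two solutions obeys $\|w\|_\mathbb{V}^2=(w,w)_\mathbb{V}=0$.

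Next I would recover the strong form and upgrade the regularity. Since $g\mapsto(v,g)_\mathbb{V}-(f,g)$ annihilates the divergence-free test functions, de~Rham's theorem on the connected open set $\mathcal{O}$ produces a pressure $\mathfrak{P}\in L^2(\mathcal{O})$ with $v-\alpha\triangle v+\nabla\mathfrak{P}=f$ in $\mathcal{D}'(\mathcal{O})$; together with ${\rm div}\,v=0$ and $v|_{\partial\mathcal{O}}=0$ this is a Stokes system with right-hand side $f-v$. For $f\in\mathbb{V}$ the quickest route to $\mathbb{W}$ is the identity $\mathrm{curl}(v-\alpha\triangle v)=\mathrm{curl}\,f\in L^2(\mathcal{O})$ (as $\mathrm{curl}\,\nabla\mathfrak{P}=0$), which places $v$ in $\mathbb{W}$ by definition and hence, by (\ref{W = W}), in $\mathbb{H}^3(\mathcal{O})$; combining $\|v\|_\mathbb{V}\le|f|\le\|f\|_\mathbb{V}$ with a bound $|\mathrm{curl}\,f|\le c\,\|f\|_\mathbb{V}$ (which follows from (\ref{Eq 01})) gives $\|v\|_\mathbb{W}^2=\|v\|_\mathbb{V}^2+|\mathrm{curl}\,f|^2\le(1+c^2)\|f\|_\mathbb{V}^2$, which is (\ref{Eq GS-02}). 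For the general claim ($f\in\mathbb{H}^l$, $l\ge1$) I would bootstrap using the standard $\mathbb{H}^{k+2}$-regularity of the Stokes operator (\cite{SO1}, \cite{SO2}, Theorem~2.5 of \cite{RS-10}, Theorem~2.2 of \cite{RS-12}): from $v\in\mathbb{H}^1$ one obtains $v\in\mathbb{H}^3$, then $f-v\in\mathbb{H}^{\min(l,3)}$ forces $v\in\mathbb{H}^{\min(l,3)+2}$, and after finitely many such steps $v\in\mathbb{H}^{l+2}\cap\mathbb{V}$.

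The genuinely delicate step is this last elliptic bootstrap: one has to check that the pressure stays well defined and does not obstruct the iteration and, above all, that the boundary regularity $\mathcal{C}^l$ really suffices to reach $\mathbb{H}^{l+2}$ for the Stokes problem --- which is exactly where one leans on the sharp Agmon--Douglis--Nirenberg / Solonnikov estimates, or simply on the cited theorems. Everything else is routine Hilbert-space analysis.
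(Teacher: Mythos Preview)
The paper does not actually prove this lemma; it simply records the statement and refers to \cite{SO1}, \cite{SO2}, Theorem~2.5 of \cite{RS-10} and Theorem~2.2 of \cite{RS-12} for a proof. Your proposal therefore goes well beyond what the paper does. The Riesz/Lax--Milgram argument for (\ref{Eq GS-01}) and uniqueness is correct and standard; the curl identity $\mathrm{curl}(v-\alpha\triangle v)=\mathrm{curl}\,f$ (the pressure drops out) is exactly the right device to land $v$ in $\mathbb{W}$ and to obtain (\ref{Eq GS-02}), and the bound $|\mathrm{curl}\,f|^2\le \tfrac{2}{\alpha}\|f\|_\mathbb{V}^2$ is precisely (\ref{Eq 9}). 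Your assessment that the only genuinely deep input is the $\mathbb{H}^{l+2}$ Stokes regularity under $\mathcal{C}^l$ boundary (Agmon--Douglis--Nirenberg / Solonnikov), and that everything else is Hilbert-space routine, is accurate. In short, your argument is sound and supplies the details that the paper delegates to the literature.
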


Define the Stokes operator by
\begin{eqnarray}\label{Eq Stoke}
Au=-\mathbb{P}\triangle u,\ \forall u\in D(A)=\mathbb{H}^2(\mathcal{O})\cap\mathbb{V},
\end{eqnarray}
here we denote by $\mathbb{P}:\mathbb{L}^2(\mathcal{O})\rightarrow\mathbb{H}$ the usual Helmholtz-Leray projector.
It follows from Lemma \ref{Lem GS} that the operator $(I+\alpha A)^{-1}$ defines an isomorphism from $\mathbb{H}^l(\mathcal{O})\cap\mathbb{H}$
into $\mathbb{H}^{l+2}(\mathcal{O})\cap\mathbb{V}$ provided that $\mathcal{O}$ is of class $\mathcal{C}^l$, $l\geq1$. Moreover, the following
properties hold
\begin{eqnarray*}
& & ((I+\alpha A)^{-1}f,g)_\mathbb{V}=(f,g),\\
& & \|(I+\alpha A)^{-1}f\|_\mathbb{W}\leq K\|f\|_{\mathbb{V}},
\end{eqnarray*}
for any $f\in \mathbb{H}^l(\mathcal{O})\cap\mathbb{V}$ and any $g\in\mathbb{V}$. From these facts, $\widehat{A}=(I+\alpha A)^{-1}A$ defines a
continuous linear operator from $\mathbb{H}^l(\mathcal{O})\cap\mathbb{V}$ onto itself for $l\geq2$, and satisfies
$$
(\widehat{A}u,g)_\mathbb{V}=(Au,g)=((u,g)),
$$
for any $u\in\mathbb{W}$ and $g\in\mathbb{V}$. Hence, for any $u\in\mathbb{W}$
$$
(\widehat{A}u,u)_\mathbb{V}=\|u\|.
$$

Let
$$
b(u,v,w)=\sum_{i,j=1}^2\int_{\mathcal{O}}u_i\frac{\partial v_j}{\partial x_i}w_jdx,
$$
for any $u,v,w\in\mathcal{C}$. Then the following identity holds(see for instance \cite{CO} \cite{Bernard}):
\begin{eqnarray}\label{curl}
((curl\Phi)\times v,w)=b(v,\Phi,w)-b(w,\Phi,v),
\end{eqnarray}
for any smooth function $\Phi,\ v$ and $w$. Now we recall the following two lemmas which can be found in \cite{RS-12}(Lemma 2.3 and Lemma 2.4),
 and also in \cite{CO} \cite{Bernard}.

\begin{lem}\label{Lem B}
For any $u,v,w\in\mathbb{W}$, we have
\begin{eqnarray}\label{Ineq B 01}
    |(curl(u-\alpha\Delta u)\times v,w)|
\leq
    \widetilde{K}\|u\|_{\mathbb{H}^3}\|v\|_\mathbb{V}\|w\|_{\mathbb{W}},
\end{eqnarray}
and
\begin{eqnarray}\label{Ineq B 02}
    |(curl(u-\alpha\Delta u)\times u,w)|
\leq
    \Theta\|u\|^2_\mathbb{V}\|w\|_{\mathbb{W}}.
\end{eqnarray}
\end{lem}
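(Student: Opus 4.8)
\emph{Proof sketch.} The plan is to dispatch (\ref{Ineq B 01}) by a bare-hands Hölder estimate and to obtain (\ref{Ineq B 02}) by an integration by parts that uses incompressibility to annihilate the term which would otherwise cost too many derivatives of $u$. For (\ref{Ineq B 01}): since $\mathcal O\subset\mathbb R^2$ and $u\in\mathbb W$, the field $u-\alpha\Delta u$ lies in $\mathbb H^1(\mathcal O)$, so by the identity (\ref{curl}) with $\Phi=u-\alpha\Delta u$,
\[
(curl(u-\alpha\Delta u)\times v,w)=b(v,u-\alpha\Delta u,w)-b(w,u-\alpha\Delta u,v).
\]
I would then bound each trilinear form by $|b(a,\Phi,c)|\le C\|a\|_{L^4}\|\nabla\Phi\|_{L^2}\|c\|_{L^4}$ and invoke the two-dimensional embedding $\mathbb H^1(\mathcal O)\hookrightarrow L^4(\mathcal O)$, the equivalence of $\|\cdot\|_{\mathbb V}$ with the $\mathbb H^1_0$-norm (so $\|v\|_{L^4}\le C\|v\|_{\mathbb V}$), the equivalence of $\|\cdot\|_{\mathbb W}$ with the $\mathbb H^3(\mathcal O)$-norm from (\ref{W-02}) (so $\|w\|_{L^4}\le C\|w\|_{\mathbb H^3}\le C\|w\|_{\mathbb W}$), and $\|\nabla(u-\alpha\Delta u)\|_{L^2}\le C\|u\|_{\mathbb H^3}$. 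This gives (\ref{Ineq B 01}).

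For (\ref{Ineq B 02}) only $\|u\|_{\mathbb V}^2$ is allowed on the right, so the three derivatives of $u$ hidden in $curl(u-\alpha\Delta u)$ must all be transferred onto $w$. Writing the scalar $q:=curl\,u$ and using (\ref{curl}) with $v=u$ (the tensor $u_iw_j-w_iu_j$ being antisymmetric in two dimensions), I would start from
\[
(curl(u-\alpha\Delta u)\times u,w)=\int_{\mathcal O}\big(q-\alpha\Delta q\big)(u\times w)\,dx,\qquad u\times w:=u_1w_2-u_2w_1 .
\]
The $q$-term is immediate, $\big|\int_{\mathcal O}q\,(u\times w)\,dx\big|\le\|q\|_{L^2}\|u\|_{L^4}\|w\|_{L^4}\le C\|u\|_{\mathbb V}^2\|w\|_{\mathbb W}$, using $\|q\|_{L^2}=\|curl\,u\|_{L^2}\le\|\nabla u\|_{L^2}\le C\|u\|_{\mathbb V}$. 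For the other term I would integrate by parts twice; as $u,w\in\mathbb W\subset\mathbb H^3(\mathcal O)$ both vanish on $\partial\mathcal O$, the function $u\times w$ and its normal derivative vanish there, so no boundary terms survive and $-\alpha\int_{\mathcal O}(\Delta q)(u\times w)\,dx=-\alpha\int_{\mathcal O}q\,\Delta(u\times w)\,dx$.

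The crucial point is then the expansion
\[
\Delta(u\times w)=\big((\Delta u_1)w_2-(\Delta u_2)w_1\big)+2\big(\nabla u_1\cdot\nabla w_2-\nabla u_2\cdot\nabla w_1\big)+\big(u_1\Delta w_2-u_2\Delta w_1\big),
\]
together with the observation that the first group contributes nothing: for a divergence-free field one has, in two dimensions, $\Delta u=\nabla^{\perp}q$ with $\nabla^{\perp}=(-\partial_2,\partial_1)$, so $(\Delta u_1)w_2-(\Delta u_2)w_1=-\,w\cdot\nabla q$ and
\[
\int_{\mathcal O}q\,\big((\Delta u_1)w_2-(\Delta u_2)w_1\big)\,dx=-\int_{\mathcal O}q\,(w\cdot\nabla q)\,dx=-\frac12\int_{\mathcal O}w\cdot\nabla(q^2)\,dx=0
\]
by ${\rm div}\,w=0$ and $w|_{\partial\mathcal O}=0$. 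The remaining two groups are then controlled bluntly: the middle one by $C\|\nabla u\|_{L^2}\|\nabla w\|_{L^\infty}\|q\|_{L^2}\le C\|u\|_{\mathbb V}^2\|w\|_{\mathbb H^3}$ via $\mathbb H^3(\mathcal O)\hookrightarrow W^{1,\infty}(\mathcal O)$, the last by $C\|u\|_{L^4}\|\Delta w\|_{L^4}\|q\|_{L^2}\le C\|u\|_{\mathbb V}^2\|w\|_{\mathbb H^3}$ since $\Delta w\in\mathbb H^1(\mathcal O)\hookrightarrow L^4(\mathcal O)$. Collecting the pieces and using (\ref{W-02}) once more yields (\ref{Ineq B 02}).

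The only real obstacle is (\ref{Ineq B 02}), and within it the single step needing an idea rather than bookkeeping is the vanishing of $\int_{\mathcal O}q\big((\Delta u_1)w_2-(\Delta u_2)w_1\big)\,dx$: estimated naively this term would demand $\|u\|_{\mathbb H^2}$, which is not available, and it is only the algebraic identity $\Delta u=\nabla^{\perp}(curl\,u)$ for divergence-free fields, together with ${\rm div}\,w=0$, that makes the bound close. Everything else is Hölder's inequality plus the two-dimensional Sobolev embeddings and the norm equivalences recorded in Section~2; one should also remember to justify the integrations by parts and the pointwise products by first taking $u,v,w$ smooth and divergence-free and then passing to the limit in $\mathbb W$.
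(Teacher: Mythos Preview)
The paper does not actually prove this lemma: it is stated as a recalled result, with the line ``Now we recall the following two lemmas which can be found in \cite{RS-12} (Lemma 2.3 and Lemma 2.4), and also in \cite{CO} \cite{Bernard}'' immediately preceding it, and no argument is supplied. So there is no in-paper proof to compare against.

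Your argument is correct. The treatment of (\ref{Ineq B 01}) via the identity (\ref{curl}) and the $L^4$--$L^2$--$L^4$ H\"older split is standard and sound. For (\ref{Ineq B 02}), writing the pairing as $\int_{\mathcal O}(q-\alpha\Delta q)(u\times w)\,dx$ with $q=curl\,u$, integrating by parts twice (the boundary terms do vanish: $u\times w$ and $\partial_n(u\times w)$ both vanish on $\partial\mathcal O$ because $u=w=0$ there), and then killing the only dangerous contribution $\int q\big((\Delta u_1)w_2-(\Delta u_2)w_1\big)$ through the two-dimensional identity $\Delta u=\nabla^{\perp}(curl\,u)$ for divergence-free $u$ together with ${\rm div}\,w=0$, is exactly the right mechanism and matches the approach in the cited references. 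The remaining estimates via $\mathbb H^3(\mathcal O)\hookrightarrow W^{1,\infty}(\mathcal O)$ and $\mathbb H^1(\mathcal O)\hookrightarrow L^4(\mathcal O)$ are routine. Your closing remark about approximating by smooth divergence-free fields is the correct way to make the manipulations rigorous at the stated regularity.
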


Set $$B(u,v)=curl(u-\alpha\Delta u)\times v, \ \ \forall u,v\in\mathbb{W}.$$
%and define the bilinear operator $\widehat{B}(\cdot,\cdot):\ \mathbb{W}\times\mathbb{V}\rightarrow\mathbb{W}^*$ as
%\begin{eqnarray}\label{Lem B-01}
%\widehat{B}(u,v)=(I+\alpha A)^{-1}B(u,v).
%\end{eqnarray}
%\begin{lem}\label{Lem-B-01}
%For any $u\in\mathbb{W}$ and $v\in\mathbb{V}$ there holds
%\begin{eqnarray}\label{Eq B-01}
%    \|\widehat{B}(u,v)\|_{\mathbb{W}^*}
%\leq
%    C\|u\|_\mathbb{W}\|v\|_\mathbb{V},
%\end{eqnarray}
%and
%\begin{eqnarray}\label{Eq B-02}
% \|\widehat{B}(u,u)\|_{\mathbb{W}^*}
%\leq
%    C_B\|u\|^2_\mathbb{V}.
%\end{eqnarray}
%In addition
%\begin{eqnarray}\label{Eq B-03}
% \langle\widehat{B}(u,v),v\rangle=0,
%\end{eqnarray}
%which implies
%\begin{eqnarray}\label{Eq B-04}
% \langle\widehat{B}(u,v),w\rangle=-\langle\widehat{B}(u,w),v\rangle,
%\end{eqnarray}
%for any $u,\ v,\ w\in\mathbb{W}$.
%
%\end{lem}

\section{Formulation of the main result}

In this section, we will state the precise assumptions on the coefficients and collect some preliminary results from \cite{RS-10} and \cite{RS-12}, which will be used in the following sections.

 Assume that $\{W(s),\ s\in[0,\infty)\}$ is a $U$-cylindrical Wiener process admitting the following representation:
  $$
  W=\sum_{n}\beta_n\bar{e}_n,
  $$
  where $(\bar{e}_n)_n$ is a complete orthonormal system of $U$ and $(\beta_n)_n$ is a sequence of independent Brownian motions.

  Given Hilbert spaces $Q_1,Q_2$, we denote by $\mathcal{L}_2(Q_1,Q_2)$ the space of all Hilbert-Schmidt operators from $Q_1$ into $Q_2$.
  And $\mathcal{L}(Q_1,Q_2)$ denotes the space of bounded linear operators from $Q_1$ into $Q_2$.
Let $\phi:\mathbb{V}\rightarrow \mathcal{L}_2(U,\mathbb{V})$ be a given measurable mapping. We denote by $P_N$
the orthogonal projection from $\mathbb{V}$ into the space $Span(e_1,\cdots,e_N)$. Now we introduce the following conditions:

{\bf (H0)} The mapping $\phi:\mathbb{V}\rightarrow\mathcal{L}_2(U,\mathbb{V})$ is bounded and Lipschitz, i.e., there exist constants
$R$ and $L_\phi$ such that
$$
R=\sup_{v\in\mathbb{V}}\|\phi(v)\|^2_{\mathcal{L}_2(U,\mathbb{V})},
$$
and
$$
\|\phi(v_1)-\phi(v_2)\|_{\mathcal{L}_2(U,\mathbb{V})}\leq L_\phi\|v_1-v_2\|_\mathbb{V},\ \ \ \forall v_1,v_2\in\mathbb{V}.
$$

{\bf (H1)} Recall the constant $K$ in Lemma \ref{Lem GS}, and $\Theta$ in (\ref{Ineq B 02}).
There exists $N\in\mathbb{N}$ and a bounded measurable mapping $g:\mathbb{V}\rightarrow\mathcal{L}(\mathbb{V},U)$ such
that for any $v\in\mathbb{V}$
\begin{eqnarray}\label{eq c N}
\phi(v)g(v)=P_N,
\end{eqnarray}
and the viscosity constant $\nu$ satisfies
\begin{eqnarray}\label{eq c nu}
\frac{1}{2\Theta^2}\frac{\nu}{\mathcal{P}^2+\alpha}\Big(\frac{2\nu}{\mathcal{P}^2+\alpha}-1-\frac{K^2L^2_{\phi}}{\lambda_1}\Big)
\geq
\Big(1+\frac{2}{\lambda_1}+\frac{2(\mathcal{P}^2+\alpha)}{\lambda_1\alpha^2}\Big)K^2R.
\end{eqnarray}

\begin{remark}
(\ref{eq c N}) can be seen as a non degeneracy condition on the low modes, and (\ref{eq c nu}) is a technique condition.
%is equivalent to the following property
%$$P_N\mathbb{V}\subset Im(\phi(v)).$$
\end{remark}

%For for any $u_1,u_2\in\mathbb{V}$,
%\begin{eqnarray}\label{G-01}
%\phi(0,t)=0,
%\end{eqnarray}
%and
%\begin{eqnarray}\label{G-02}
%\|G(u_1,t)-G(u_2,t)\|_{\mathbb{V}^{\otimes m}}
%\leq
%C\|u_1-u_2\|_\mathbb{V}.
%\end{eqnarray}

%We now define the operator $\widehat{\phi}$ which map $\mathbb{V}$ into $\mathcal{L}_2(U,\mathbb{W})$ by
%\begin{eqnarray*}
%\widehat{\phi}(u)=(I+\alpha A)^{-1} \phi(u).
%\end{eqnarray*}
%
%{\bf (H0)} implies that there exist $\widehat{R}$, $\widehat{L_\phi}$
%such that
%$$
%\widehat{R}=\sup_{v\in\mathbb{V}}\|\widehat{\phi}(v)\|^2_{\mathcal{L}_2(U,\mathbb{V})}
%$$
%and
%$$
%\|\widehat{\phi}(v_1)-\widehat{\phi}(v_2)\|_{\mathcal{L}_2(U,\mathbb{V})}\leq \widehat{L_\phi}\|v_1-v_2\|_\mathbb{V},\ \ \ \forall v_1,v_2\in\mathbb{V}.
%$$
%
%By applying $(I+\alpha A)^{-1}$ to (\ref{01}), we obtain the abstract stochastic evolution equations
%\begin{eqnarray}\label{Abstract}
%du (t)+\nu \widehat{A}u (t)dt+\widehat{B}(u (t),u (t))dt=\widehat{\phi}(u (t))dW(t),
%\end{eqnarray}
%with initial value $u_0=u(0)$, which holds in $\mathbb{W}^*$. With the properties of the operators involved, it can be proved that a stochastic
%process $u $ satisfies (\ref{Abstract}) if and only if it verifies (\ref{01}) in the weak sense of partial differential equations.
%

Now we recall the concept of solution of the problem (\ref{01}) in \cite{RS-12}.

\begin{dfn}\label{Def 01}
A stochastic process $u $ is called a solution of the system (\ref{01}), if

1. $u(0)=u_0$,

2. $u \in L^p(\Omega,\mathcal{F},P;L^\infty([0,\infty),\mathbb{W})),\ 2\leq p<\infty,$

3. For all $t\geq 0$, $u (t)$ is $\mathcal{F}_t$-measurable,

4. For any $t\in(0,\infty)$ and $v\in\mathbb{W}$, the following identity holds almost surely
\begin{eqnarray*}
&  &(u (t)-u (0),v)_{\mathbb{V}}+\int_0^t[\nu ((u (s),v))+(curl (u (s)-\alpha \Delta u (s))\times u (s),v)]ds\\
&=&
    \int_0^t(\phi(u (s))dW(s),v).
\end{eqnarray*}
%Or equivalently, the following equation
%\begin{eqnarray*}
%u (t)+\int_0^t\Big(\nu \widehat{A}u (s)+\widehat{B}(u (s),u (s))\Big)ds
%=
%u_0+\int_0^t\widehat{\phi}(u (s))dW(s),
%\end{eqnarray*}
%holds in $\mathbb{W}^*$ $P$-a.s..
\end{dfn}

Using Galerkin approximation scheme for the system (\ref{01}), Razafimandimby and Sango \cite{RS-12}
obtained the following theorem (see Theorem 3.4 and Theorem 4.1 in \cite{RS-12}).
\begin{thm}\label{Solution Existence}
Let $u_0\in\mathbb{W}$. Assume {\bf (H0)} holds. Then
\begin{itemize}
  \item[(1)] the system (\ref{01}) has a unique solution,

  \item[(2)] the solution $u$ admits a version which is continuous in $\mathbb{V}$ with respect to
the strong topology and continuous in $\mathbb{W}$ with respect to
the weak topology.
 \end{itemize}
 \end{thm}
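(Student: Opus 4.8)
The statement to be established is Theorem~\ref{Solution Existence}, the existence and uniqueness of solutions together with the regularity of paths. Since the detailed proof is attributed to Razafimandimby and Sango, I would reproduce the skeleton of a Galerkin argument adapted to the second grade fluid setting. First I would set up the finite-dimensional approximation: project equation (\ref{01}) onto $\mathrm{Span}(e_1,\dots,e_N)$ using the special basis from (\ref{Basis}), obtaining a system of stochastic ODEs for the coefficients $u^N(t)=\sum_{i=1}^N c_i^N(t)e_i$. Because the nonlinearity $B(u,v)=curl(u-\alpha\Delta u)\times v$ is locally Lipschitz on the finite-dimensional space and $\phi$ is globally Lipschitz by {\bf (H0)}, standard SDE theory gives a unique local solution $u^N$; a priori estimates will upgrade this to a global one.

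The heart of the argument is the energy estimate. Applying It\^o's formula to $\|u^N(t)\|_\mathbb{V}^2$ and using the key cancellation $(B(u^N,u^N),u^N)_\mathbb{V}=0$ — which follows from identity (\ref{curl}) with $\Phi=u^N$, $v=w=u^N$ since $b(v,\Phi,v)-b(v,\Phi,v)=0$ after the generalized-Stokes reformulation $(v,g)_\mathbb{V}=(f,g)$ — the troublesome nonlinear term drops out. Then the viscosity term $-\nu((u^N,u^N))$ is dissipative, and the stochastic term is controlled via the Burkholder--Davis--Gundy inequality together with the bound $R=\sup_v\|\phi(v)\|^2_{\mathcal{L}_2(U,\mathbb{V})}$ from {\bf (H0)}. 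This yields uniform-in-$N$ bounds for $\mathbb{E}\sup_{t\le T}\|u^N(t)\|_\mathbb{V}^{2p}$. To get the $\mathbb{W}$-level bound required by Definition~\ref{Def 01}(2), I would apply It\^o's formula to $|curl(u^N-\alpha\Delta u^N)|^2$, invoking Lemma~\ref{Lem B} (specifically (\ref{Ineq B 02})) to absorb the nonlinear contribution against the dissipation, and Lemma~2.1 (the equivalence (\ref{W-02})) to translate these into $\mathbb{H}^3$-bounds.

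With the uniform estimates in hand, I would extract a subsequence converging weakly-$\ast$ in $L^p(\Omega;L^\infty([0,T],\mathbb{W}))$ and, via a compactness/tightness argument (Aubin--Lions type, exploiting the compact embedding $\mathbb{W}\hookrightarrow\mathbb{V}$ noted before (\ref{Basis})), pass to the limit in the nonlinear term to obtain a solution in the sense of Definition~\ref{Def 01}. Uniqueness follows by a standard energy estimate on the difference $w=u^{(1)}-u^{(2)}$ of two solutions: applying It\^o to $\|w(t)\|_\mathbb{V}^2$, using the Lipschitz bound on $\phi$, the estimate (\ref{Ineq B 01}) to control the difference of nonlinear terms by $\widetilde{K}\|u^{(i)}\|_{\mathbb{H}^3}\|w\|_\mathbb{V}\|w\|_\mathbb{W}$, and Gronwall's lemma (with the $\mathbb{W}$-norm of the solutions integrable in time by the a priori bounds). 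Finally, the path regularity in part (2) — strong continuity in $\mathbb{V}$ and weak continuity in $\mathbb{W}$ — comes from the stochastic evolution equation satisfied by $u$: the $\mathbb{V}$-valued It\^o integral has continuous paths, the drift is $\mathbb{W}^*$-valued and Bochner-integrable, and a standard lemma (weak continuity plus norm bound plus strong continuity in a weaker space) upgrades weak $\mathbb{W}$-continuity to strong $\mathbb{V}$-continuity.

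\textbf{Main obstacle.} The delicate point is obtaining the uniform $\mathbb{W}$-estimate: unlike the $\mathbb{V}$-estimate where the nonlinearity cancels exactly, at the $\mathbb{H}^3$/$curl$-level one only has the bound (\ref{Ineq B 02}), which is quadratic in $\|u\|_\mathbb{V}$ times linear in $\|u\|_\mathbb{W}$, so closing the estimate requires carefully combining this with the dissipation and the already-established $\mathbb{V}$-bounds, and handling the stochastic term at this higher regularity level (which needs $\phi$ to map into $\mathcal{L}_2(U,\mathbb{V})$ and the generalized-Stokes smoothing to transfer regularity). Getting all constants to line up so the Gronwall argument closes — rather than blowing up — is the technically demanding part.
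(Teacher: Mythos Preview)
The paper does not supply its own proof of this theorem; it quotes the result directly from Razafimandimby and Sango \cite{RS-12} (Theorems~3.4 and~4.1 there), noting only that the method is Galerkin approximation. Your sketch follows that Galerkin strategy and is broadly the right outline, but two points deserve correction.

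First, your uniqueness argument invokes the wrong estimate. Using (\ref{Ineq B 01}) to bound the difference of nonlinear terms leaves a factor $\|w\|_{\mathbb{W}}$ on the right-hand side, and a Gronwall inequality in $\|w\|_\mathbb{V}^2$ cannot close against that. The correct manipulation---the one behind (\ref{eq:01}) and reused in the proof of Lemma~\ref{lem 02}---exploits the antisymmetry $(B(a,b),c)=-(B(a,c),b)$ to reduce $(\delta B, r)$ to $-(B(r,r),u)$; then (\ref{Ineq B 02}) gives $\Theta\|r\|_\mathbb{V}^2\|u\|_\mathbb{W}$, and Gronwall with the weight $\exp\bigl(-\int_0^t \|u(s)\|_\mathbb{W}^2\,ds\bigr)$ closes.

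Second, your ``main obstacle'' at the $\mathbb{W}$-level is not actually an obstacle. The nonlinear contribution to $d\|u^N\|_*^2$, where $\|v\|_*=|curl(v-\alpha\Delta v)|$, vanishes \emph{exactly}: since $\mathrm{div}\,u=0$ one has $curl\bigl(curl\,\Phi\times u\bigr)=(u\cdot\nabla)\,curl\,\Phi$, and then $\bigl(curl\,\Phi,\,(u\cdot\nabla)\,curl\,\Phi\bigr)=0$ by integration by parts. This is precisely the identity used in the proof of Lemma~\ref{Lem 01} in the present paper. So no absorption of the nonlinearity against the dissipation is needed at the $curl$-level; the genuine work there is controlling the linear cross-term $\bigl(curl(u-\alpha\Delta u),\,curl\,u\bigr)$ and lifting $\phi(u)\in\mathcal{L}_2(U,\mathbb{V})$ to $\mathcal{L}_2(U,\mathbb{W})$ via the generalized Stokes operator (Lemma~\ref{Lem GS}).
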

 Moreover, from the proof of Theorem 4.1 in \cite{RS-12}, we have
 \begin{thm}\label{Solution Existence 01}
  Assume that $u_1,u_2\in\mathbb{W}$ are $\mathcal{F}_t$-measurable, and let $\{X_1(t+s),\ s\geq0\}$ and $\{X_2(t+s),\ s\geq0\}$ be two solutions of the system (\ref{01}) with initial condition $X_1(t)=u_1$
 and $X_2(t)=u_2$, respectively. Then, for any $O\in\mathcal{F}_t$
 \begin{eqnarray}\label{eq:01}
 & &E\Big(\sigma(t+s,t)\|X_1(t+s)-X_2(t+s)\|^2_\mathbb{V}1_{O}\Big)\nonumber\\
 &\leq&
 E\Big(\|u_1-u_2\|^2_\mathbb{V}1_{O}\Big)+C\int_t^{t+s}E\Big(\sigma(l,t)\|X_1(l)-X_2(l)\|^2_\mathbb{V}1_{O}\Big)dl,
 \end{eqnarray}
here $\sigma(l,t)=\exp\Big(-\int_t^{l}\|X_2(s)\|^2_\mathbb{W}ds\Big)$.
\end{thm}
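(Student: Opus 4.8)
The plan is to obtain a differential inequality for the weighted quantity $\sigma(l,t)\|X_1(l)-X_2(l)\|^2_{\mathbb{V}}$ and then integrate it. Set $Z(l)=X_1(l)-X_2(l)$; subtracting the two instances of the variational formulation of Definition \ref{Def 01}, $Z$ solves an equation whose nonlinear term is $B(X_1,X_1)-B(X_2,X_2)$ and whose noise is $(\phi(X_1)-\phi(X_2))\,dW$. Applying It\^o's formula to $\|Z(l)\|^2_{\mathbb{V}}$ (justified by the regularity in Definition \ref{Def 01}, exactly as in the uniqueness part of Theorem 4.1 of \cite{RS-12}) and already discarding the dissipative term $-2\nu\|Z(l)\|^2\leq 0$ gives
\begin{eqnarray*}
d\|Z(l)\|^2_{\mathbb{V}}\leq\Big[-2\big(B(X_1,X_1)-B(X_2,X_2),Z(l)\big)+\|\phi(X_1)-\phi(X_2)\|^2_{\mathcal{L}_2(U,\mathbb{V})}\Big]dl+dM_l,
\end{eqnarray*}
where $M_l$ is the resulting continuous local martingale.

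The heart of the matter is the trilinear term. By bilinearity of $B$, $B(X_1,X_1)-B(X_2,X_2)=B(X_1,Z)+B(Z,X_2)$. From identity (\ref{curl}) with $\Phi=X_1-\alpha\Delta X_1$ and $v=w=Z$ one gets $(B(X_1,Z),Z)=b(Z,\Phi,Z)-b(Z,\Phi,Z)=0$, while the same identity yields the antisymmetry $((curl\,\Phi)\times v,w)=-((curl\,\Phi)\times w,v)$, hence $(B(Z,X_2),Z)=-(B(Z,Z),X_2)$. Now (\ref{Ineq B 02}) with $u=Z$, $w=X_2$ together with Young's inequality give
\begin{eqnarray*}
\big|(B(X_1,X_1)-B(X_2,X_2),Z)\big|=\big|(B(Z,Z),X_2)\big|\leq\Theta\|Z\|^2_{\mathbb{V}}\|X_2\|_{\mathbb{W}}\leq\tfrac12\|X_2\|^2_{\mathbb{W}}\|Z\|^2_{\mathbb{V}}+\tfrac{\Theta^2}{2}\|Z\|^2_{\mathbb{V}},
\end{eqnarray*}
and {\bf (H0)} gives $\|\phi(X_1)-\phi(X_2)\|^2_{\mathcal{L}_2(U,\mathbb{V})}\leq L_\phi^2\|Z\|^2_{\mathbb{V}}$.

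Since $\sigma(\cdot,t)$ is absolutely continuous with $\frac{d}{dl}\sigma(l,t)=-\|X_2(l)\|^2_{\mathbb{W}}\,\sigma(l,t)$, the finite-variation product rule combined with the two preceding displays produces
\begin{eqnarray*}
d\big(\sigma(l,t)\|Z(l)\|^2_{\mathbb{V}}\big)\leq(\Theta^2+L_\phi^2)\,\sigma(l,t)\|Z(l)\|^2_{\mathbb{V}}\,dl+\sigma(l,t)\,dM_l,
\end{eqnarray*}
the decisive point being that the contribution $\|X_2\|^2_{\mathbb{W}}\|Z\|^2_{\mathbb{V}}$ coming from the nonlinearity is exactly matched by $\|Z\|^2_{\mathbb{V}}\,\frac{d}{dl}\sigma(l,t)$. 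Integrating over $[t,t+s]$, multiplying by $1_O$ (which is $\mathcal{F}_t$-measurable) and taking expectation kills the stochastic term: conditioning on $\mathcal{F}_t$, it is a martingale issued from $0$ at time $t$, genuinely so because $\sigma\leq 1$, $\phi$ is bounded by {\bf (H0)}, and $Z\in L^2(\Omega;L^\infty([0,\infty),\mathbb{V}))$ by Theorem \ref{Solution Existence}. Since $\sigma(t,t)=1$ and $Z(t)=u_1-u_2$, this is precisely (\ref{eq:01}) with $C=\Theta^2+L_\phi^2$.

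I expect the main obstacle to be exactly this nonlinear term: estimating it directly via (\ref{Ineq B 01}) would leave a factor $\|Z\|^2_{\mathbb{W}}$ that the weight $\sigma$ cannot absorb, so one genuinely needs the cancellation $(B(X_1,Z),Z)=0$ together with the antisymmetry trick, which relocate the poorly controlled quantity $curl(Z-\alpha\Delta Z)$ into a pairing in which $X_2$, rather than $Z$, carries the $\mathbb{W}$-norm, whereupon $\|X_2\|^2_{\mathbb{W}}$ is absorbed into the Lyapunov weight $\sigma(l,t)$.
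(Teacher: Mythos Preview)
Your proof is correct and follows exactly the route the paper has in mind: the paper does not spell out a proof here but points to Theorem~4.1 of \cite{RS-12}, and the very same manipulation appears later in the proof of Lemma~\ref{lem 02}, where $(\delta B,r_M)$ is reduced to $(B(r_M,r_M),u^M)$ and bounded via~(\ref{Ineq B 02}) so that the $\|\cdot\|^2_{\mathbb W}$ factor lands on the reference solution and is absorbed by the exponential weight. Your identification of the cancellation $(B(X_1,Z),Z)=0$ and the antisymmetry $(B(Z,X_2),Z)=-(B(Z,Z),X_2)$ is precisely the mechanism used there, and your Young splitting is tuned to match the given weight $\sigma$.
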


\begin{remark}\label{Remark 01}
By (\ref{eq:01}), if $u_1=u_2$ on $O\in\mathcal{F}_t$, then $X_1(t+\cdot)=X_2(t+\cdot)$ on $O$ $P$-a.s..
\end{remark}

For a $\mathbb{W}$-valued, $\mathcal{F}_{t_0}$-measurable random variable $Y$, let $u(t_0+\cdot,t_0,Y)$ be the unique solution of (\ref{01}) on the time interval $[0,\infty)$ with
 initial condition $u(t_0,t_0,Y)=Y$. Denote
\begin{eqnarray}\label{X}
X^x(t)=X(t,W,x)=
\left\{
 \begin{array}{ll}
 & \hbox{$u(t,0,x),\ \ \ x\in\mathbb{W}$;} \\
 & \hbox{$x,\ \ \ x\in\mathbb{V}/\mathbb{W}$.}
 \end{array}
\right.
\end{eqnarray}
Then we define the operators $\mathcal{P}_t:\ B_b(\mathbb{V})\rightarrow B_b(\mathbb{V})$ as
$$
(\mathcal{P}_t\varphi)(x)=E[\varphi(X^x(t))],
$$
where $B_b(\mathbb{V})$ is the space of bounded measurable functions on $\mathbb{V}$. Let $C_b(\mathbb{V})$ be the space of bounded continuous
 functions.

\begin{lem}
$\{X^x,\ x\in\mathbb{V}\}$ defines a Markov process in the sense that, for every $x\in\mathbb{V},\ \varphi\in C_b(\mathbb{V}),\ t,s>0$
\begin{eqnarray}\label{Eq semi 01}
E[\varphi(X^x(t+s))|\mathcal{F}_t]=(\mathcal{P}_s\varphi)(X^x(t)),\ \ P\text{-}a.s..
\end{eqnarray}
\end{lem}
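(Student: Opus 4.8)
The plan is to establish the Markov property directly from the uniqueness and flow property of solutions to (\ref{01}), exploiting the independence of increments of the cylindrical Wiener process $W$. First I would note that for $x\in\mathbb{V}\setminus\mathbb{W}$ the process $X^x$ is constant in time, so (\ref{Eq semi 01}) is trivial in that case; hence assume $x\in\mathbb{W}$, so that $X^x(t)=u(t,0,x)$ is a genuine $\mathbb{W}$-valued solution. The key structural fact is the \emph{cocycle (flow) identity}: for $s,t>0$,
\begin{eqnarray*}
X^x(t+s)=u(t+s,0,x)=u\big(t+s,\,t,\,X^x(t)\big)\quad P\text{-a.s.},
\end{eqnarray*}
which follows from the uniqueness statement of Theorem \ref{Solution Existence} together with Remark \ref{Remark 01}: both sides solve (\ref{01}) on $[t,\infty)$ with the same $\mathcal{F}_t$-measurable initial datum $X^x(t)\in\mathbb{W}$, driven by the same noise, so they agree almost surely. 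The point of writing $X^x(t+s)$ in this form is that $u(t+s,t,\cdot)$ is built only from the Wiener increments $W(t+r)-W(t)$, $r\in[0,s]$, which are independent of $\mathcal{F}_t$.

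Next I would make the dependence on the noise explicit. Let $\theta_t$ denote the standard shift on path space and write $W^{(t)}(r):=W(t+r)-W(t)$; then $W^{(t)}$ is again a $U$-cylindrical Wiener process, independent of $\mathcal{F}_t$, and by uniqueness $u(t+s,t,y)=\Psi_s(y,W^{(t)})$ for a fixed measurable map $\Psi_s:\mathbb{W}\times(\text{path space})\to\mathbb{V}$, where $\Psi_s(y,\cdot)$ describes the solution started from $y\in\mathbb{W}$ over time $s$. The same map gives $X^y(s)=\Psi_s(y,W)$ for $y\in\mathbb{W}$, hence $(\mathcal{P}_s\varphi)(y)=E[\varphi(\Psi_s(y,W))]$. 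Now for $\varphi\in C_b(\mathbb{V})$, using the flow identity,
\begin{eqnarray*}
E\big[\varphi(X^x(t+s))\,\big|\,\mathcal{F}_t\big]
= E\big[\varphi\big(\Psi_s(X^x(t),W^{(t)})\big)\,\big|\,\mathcal{F}_t\big].
\end{eqnarray*}
Since $X^x(t)$ is $\mathcal{F}_t$-measurable and $W^{(t)}$ is independent of $\mathcal{F}_t$, the standard "freezing lemma" (disintegration / independence lemma for conditional expectations) yields that the right-hand side equals $h(X^x(t))$ where $h(y)=E[\varphi(\Psi_s(y,W^{(t)}))]=E[\varphi(\Psi_s(y,W))]=(\mathcal{P}_s\varphi)(y)$, the middle equality because $W^{(t)}$ and $W$ have the same law. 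This is exactly (\ref{Eq semi 01}).

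To make the freezing lemma applicable rigorously I need two measurability facts, and these are the only real obstacles. First, $(y,\omega)\mapsto\Psi_s(y,\omega)$ must be jointly measurable from $\mathbb{W}\times\Omega$ into $\mathbb{V}$; this is obtained from the Galerkin construction in \cite{RS-12} (the finite-dimensional approximations are jointly measurable by standard SDE theory, and the limit is taken along a fixed subsequence using the uniform estimates underlying Definition \ref{Def 01}(2)), combined with the a.s.\ uniqueness from Theorem \ref{Solution Existence} to identify the limit with $u(s,0,y)$. Second, I should remark that it suffices to prove (\ref{Eq semi 01}) for $x\in\mathbb{W}$ and extend by the triviality on $\mathbb{V}\setminus\mathbb{W}$, and that $\mathcal{P}_s$ indeed maps $B_b(\mathbb{V})$ into itself — measurability of $y\mapsto(\mathcal{P}_s\varphi)(y)$ follows from the joint measurability of $\Psi_s$ and Fubini. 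Once these are in place the argument is the classical SDE proof of the Markov property; I expect the bookkeeping around the distinction between $x\in\mathbb{W}$ and $x\in\mathbb{V}\setminus\mathbb{W}$, and the citation-level justification of joint measurability, to be the most delicate points rather than any genuinely new estimate.
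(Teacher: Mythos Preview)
Your proposal is correct and follows the same classical strategy as the paper: trivialize on $\mathbb{V}\setminus\mathbb{W}$, invoke the flow identity $X^x(t+s)=u(t+s,t,X^x(t))$ via uniqueness, and then use that the solution on $[t,t+s]$ depends only on the increments $W(t+\cdot)-W(t)$, which are independent of $\mathcal{F}_t$. The only difference is in how the conditioning step is justified: the paper approximates the $\mathcal{F}_t$-measurable initial datum $\eta=X^x(t)$ by simple $\mathbb{W}$-valued functions and passes to the limit using the $\mathbb{V}$-continuity in the initial data provided by Theorem~\ref{Solution Existence 01} (a Feller-type argument), whereas you appeal directly to the freezing/disintegration lemma after establishing joint measurability of the solution map $\Psi_s$. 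Both routes are standard; yours avoids the continuity estimate but trades it for a joint-measurability check, while the paper's approximation argument sidesteps the need to control measurability of the full solution map.
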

\begin{proof}
Noticing that (\ref{Eq semi 01}) holds when $x\in\mathbb{V}/\mathbb{W}$.
Now given $x\in\mathbb{W}$, we have
$X^x(t)=u(t,0,x)$. To prove (\ref{Eq semi 01}), it is sufficient to prove that
\begin{eqnarray*}
E[\varphi(u(t+s,0,x))Z]=E[(\mathcal{P}_s\varphi)(u(t,0,x))Z]
\end{eqnarray*}
for every bounded $\mathcal{F}_t$-measurable r.v. Z.

Since, by Theorem \ref{Solution Existence},
$$
u(t+s,0,x)=u(s,t,u(t,0,x))\ \ \ and\ \ \ E(\|u(t,0,x)\|^2_\mathbb{W})<\infty,
$$
it is sufficient to prove that
\begin{eqnarray}\label{Eq:02}
E[\varphi(u(t+s,t,\eta))Z]=E[(\mathcal{P}_s\varphi)(\eta)Z]
\end{eqnarray}
for every $\mathbb{W}$-valued $\mathcal{F}_t$-measurable r.v. $\eta$.

By (\ref{eq:01}), for any given $\xi_n,\xi\in\mathbb{W}$, the strong convergence of $\xi_n$ to $\xi$ in $\mathbb{V}$
implies that $(\mathcal{P}_s\varphi)(\xi_n)$ converges to $(\mathcal{P}_s\varphi)(\xi)$. Hence, to prove (\ref{Eq:02}), it is sufficient to prove it
for every r.v. $\eta$ of the form $\eta=\sum_{i=1}^k \eta^i1_{A^i}$ with $\eta^i\in\mathbb{W}$ and $A^i\in\mathcal{F}_t$.
By Remark \ref{Remark 01}, we just need to prove (\ref{Eq:02}) for every deterministic $\eta\in\mathbb{W}$.

Now the r.v. $u(t+s,t,\eta)$ depends only on the increments of the Brownian motion between $t$ and $t+s$, hence it is independent
of $\mathcal{F}_t$. Therefore
$$
E[\varphi(u(t+s,t,\eta))Z]=E[\varphi(u(t+s,t,\eta))]EZ.
$$
Since $u(t+s,t,\eta)$ has the same law of $u(s,0,\eta)$(by uniqueness), we have $E[\varphi(u(t+s,t,\eta))]=E[\varphi(u(s,0,\eta))]$
and thus
$$
E[\varphi(u(t+s,t,\eta))Z]=E[\varphi(u(s,0,\eta))]EZ=E[\varphi(u(s,0,\eta))Z].
$$
The proof is complete.

\end{proof}

The space of probability measures on $\mathbb{V}$ is denoted by $\mathcal{P}(\mathbb{V})$. The aim of this paper is to prove the following result.
\begin{thm}\label{thm main}
Assume that \textbf{(H0)} holds, and \textbf{(H1)} holds with some $N\in\mathbb{N}$.
Then there exits an unique invariant probability measure $\mu$ of $(\mathcal{P}_t)_{t\in\mathbb{R}^+}$ on $\mathbb{V}$
satisfying
$$
\int_{\mathbb{V}}\|u\|^2_{\mathbb{W}}\mu(du)<\infty,
$$
and there exist $C,\gamma'>0$ such that for any $\lambda\in\mathcal{P}({\mathbb{V}})$
\begin{eqnarray*}
\|\mathcal{P}^*_t\lambda-\mu\|_*\leq Ce^{-\gamma' t}\Big(1+\int_{\mathbb{V}}\|u\|^2_\mathbb{W}\lambda(du)\Big).
\end{eqnarray*}

\end{thm}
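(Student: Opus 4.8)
The strategy is to verify the hypotheses of Odasso's abstract criterion \cite{Odasso}, which reduces exponential mixing to three ingredients: (i) a Lyapunov-type structure, here an exponential moment bound of the form $E\exp(\delta\|X^x(t)\|_{\mathbb{V}}^2)\le C(1+\|x\|_{\mathbb{V}}^2)e^{-\gamma t}+C'$ together with control of $\int_0^t\|X^x(s)\|_{\mathbb{W}}^2\,ds$; (ii) an irreducibility/coupling estimate exploiting the non-degeneracy (\ref{eq c N}) of the noise on the low modes $P_N$; and (iii) a Foia\c{s}--Prodi type estimate showing that two solutions driven by the same noise on the high modes contract exponentially in $\mathbb{V}$, provided the low modes are synchronized. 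The heart of the work is (i), since (ii) follows from a Girsanov argument using $g(v)$ from \textbf{(H0)}--\textbf{(H1)} and (iii) is essentially contained in Theorem \ref{Solution Existence 01}.

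First I would establish the basic energy balance. Applying It\^o's formula to $\|X^x(t)\|_{\mathbb{V}}^2$ and using the fundamental cancellation $(B(u,u),u)=(curl(u-\alpha\Delta u)\times u,u)=0$ (which follows from (\ref{curl}) with $\Phi=u-\alpha\Delta u$, $v=w=u$), together with $((u,u))=\|u\|^2\ge(\mathcal{P}^2+\alpha)^{-1}\|u\|_{\mathbb{V}}^2$ from (\ref{Eq 01}), gives
\begin{eqnarray*}
d\|X^x(t)\|_{\mathbb{V}}^2+2\nu\|X^x(t)\|^2\,dt=2(\phi(X^x(t))dW,X^x(t))+\|\phi(X^x(t))\|_{\mathcal{L}_2(U,\mathbb{V})}^2\,dt.
\end{eqnarray*}
By \textbf{(H0)} the last term is bounded by $R$, and the drift term is coercive by (\ref{Eq 01}). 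This yields $E\|X^x(t)\|_{\mathbb{V}}^2\le e^{-ct}\|x\|_{\mathbb{V}}^2+C$ and, after integrating, $E\int_0^t\|X^x(s)\|^2\,ds\le C(1+\|x\|_{\mathbb{V}}^2)$. To get the $\mathbb{W}$-moment required for the statement $\int_{\mathbb{V}}\|u\|_{\mathbb{W}}^2\,\mu(du)<\infty$, I would apply It\^o to $\|X^x(t)\|_{\mathbb{W}}^2$ using the operator $\widehat{A}$ and the identity $(\widehat{A}u,u)_{\mathbb{V}}=\|u\|$, controlling the nonlinear term via (\ref{Ineq B 02}): $|(B(u,u),w)_{\mathbb{W}}|\le\Theta\|u\|_{\mathbb{V}}^2\|w\|_{\mathbb{W}}$ applied with an appropriate representative, then using Young's inequality and the structural constant condition (\ref{eq c nu}) to absorb it into the good term. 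This is precisely where \textbf{(H1)}'s inequality on $\nu$ is used.

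Next comes the crucial exponential integrability. Here I would prove a bound of the type $E\exp\big(\delta\int_0^t\|X^x(s)\|_{\mathbb{W}}^2\,ds-\eta t\big)\le C(1+\|x\|_{\mathbb{V}}^2)$ for small $\delta>0$, which is what feeds Odasso's criterion (it makes $\sigma(l,t)=\exp(-\int_t^l\|X_2(s)\|_{\mathbb{W}}^2\,ds)$ exponentially useful in Theorem \ref{Solution Existence 01}). The mechanism is an exponential supermartingale estimate: one looks at $\exp(\delta\|X^x(t)\|_{\mathbb{V}}^2)$, computes its It\^o differential, and observes that the quadratic variation term $4\delta^2\|\phi(X^x)^*X^x\|_U^2\le 4\delta^2 R\|X^x\|_{\mathbb{V}}^2$ is dominated by the coercive drift $-2\delta\nu(\mathcal{P}^2+\alpha)^{-1}\|X^x\|_{\mathbb{V}}^2$ once $\delta$ is small; this produces the bound on $E\exp(\delta\|X^x(t)\|_{\mathbb{V}}^2)$, and a parallel computation keeping the $-2\nu\|X^x(s)\|^2$ term on the left and relating $\|\cdot\|^2$ to $\|\cdot\|_{\mathbb{W}}^2$ via (\ref{W-02}) upgrades this to the required exponential control of $\int_0^t\|X^x(s)\|_{\mathbb{W}}^2\,ds$. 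I expect this exponential-moment estimate to be the main obstacle, both because the nonlinearity $B(u,u)$ in the $\mathbb{W}$-norm is only controlled up to the borderline constant in (\ref{eq c nu}), leaving no slack, and because one must track constants carefully enough that the exponent $\delta$ and the rate $\eta$ remain strictly positive; the quantitative condition (\ref{eq c nu}) is exactly calibrated so that the various Young's-inequality splittings close. Once these estimates are in hand, I would invoke Theorem \ref{Solution Existence 01} for the contraction, a Girsanov/Bismut argument with $g$ for the coupling on low modes, feed everything into Odasso's theorem, and read off the unique invariant measure $\mu$, its $\mathbb{W}$-integrability, and the exponential convergence $\|\mathcal{P}_t^*\lambda-\mu\|_*\le Ce^{-\gamma't}(1+\int_{\mathbb{V}}\|u\|_{\mathbb{W}}^2\,\lambda(du))$ in the dual-Lipschitz metric.
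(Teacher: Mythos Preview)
Your overall strategy---verify Odasso's criterion via Lyapunov, Girsanov shift on low modes, and a Foia\c{s}--Prodi contraction---matches the paper's. But two technical points in your plan are miscalibrated, and the first one is a real gap.

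\textbf{The $\mathbb{W}$-level estimate does not proceed by bounding the nonlinearity.} You propose to control $(B(u,u),\cdot)$ in the $\mathbb{W}$-inner product via (\ref{Ineq B 02}) and then absorb it using Young's inequality and (\ref{eq c nu}). This will not close: (\ref{Ineq B 02}) bounds the $L^2$ pairing $(B(u,u),w)$, not the $\mathbb{W}$-pairing, and if you actually attempt to estimate $\big(curl(u-\alpha\Delta u),\,curl(B(u,u))\big)$ by brute force you get a cubic-in-$u$ term that no linear dissipation can absorb. The paper instead uses the structural identity
\[
curl\big(curl(u-\alpha\Delta u)\times u\big)=(u\cdot\nabla)\,curl(u-\alpha\Delta u),
\]
so that $\big(curl(u-\alpha\Delta u),\,curl(B(u,u))\big)=0$ exactly. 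This cancellation, done at the Galerkin level and then passed to the limit, is what makes the $\mathbb{W}$-Lyapunov bound (Proposition~\ref{Prop 01}) and the exponential supermartingale for $E_f(t)=\|f(t)\|_\mathbb{W}^2+\tfrac{l}{2}\int_0^t\|f(s)\|_\mathbb{W}^2\,ds$ (Lemma~\ref{Lem G 02}) go through. Relatedly, your suggestion to upgrade exponential control of $\int_0^t\|X^x(s)\|^2\,ds$ to $\int_0^t\|X^x(s)\|_\mathbb{W}^2\,ds$ via (\ref{W-02}) runs in the wrong direction: (\ref{W-02}) says $\|\cdot\|_{\mathbb{H}^3}^2\le C(\|\cdot\|_\mathbb{V}^2+\|\cdot\|_*^2)$, not that the $H^1$-norm controls the $\mathbb{W}$-norm. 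The paper works at the $\mathbb{W}$-level from the start, on the approximations $u^M$, precisely because there is no such inequality.

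\textbf{The role of (\ref{eq c nu}) is in the contraction estimate, not in the Lyapunov estimate.} In the paper, condition (\ref{eq c nu}) is \emph{not} used to close the $\mathbb{W}$-moment bound (that closes by the cancellation above under (H0) alone). It enters only in Lemma~\ref{lem 02}/Proposition~\ref{prop 02}, where one constructs the nudged auxiliary process $\tilde u$ of (\ref{eq u tilde}) and studies $r=\tilde u-u$. There the difference of nonlinearities satisfies $(\delta B,r)=(B(r,r),u)$, which by (\ref{Ineq B 02}) is bounded by $\Theta\|r\|_\mathbb{V}^2\|u\|_\mathbb{W}$; after Young this produces a factor $\Theta^2\|u\|_\mathbb{W}^2\|r\|_\mathbb{V}^2$ that must be killed by the exponential integrability of $\int_0^t\|u\|_\mathbb{W}^2\,ds$ from Lemma~\ref{Lem G 02}. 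Condition (\ref{eq c nu}) is exactly $\frac{l\,l_1}{2\Theta^2}\ge l_0R$, the inequality needed so that the rate $l_1$ in the $r$-equation beats the growth rate $l_0R$ permitted by the exponential-moment bound. So (\ref{Ineq B 02}) and (\ref{eq c nu}) are used one level ``down'', on the difference process, not on $u$ itself. Your Theorem~\ref{Solution Existence 01} citation points in the right direction, but the actual verification of (D1)--(D3) goes through the explicit nudged process $\tilde u$ and the estimate (\ref{eq:04}), not through the raw continuity estimate (\ref{eq:01}).
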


\section{Proof of the main result}

This section is devoted to the proof of the main result.
We first recall the general criterion established in \cite{Odasso}.

Given a Polish space $E$, $Lip_b(E)$ will denote the space of all bounded, Lipschitz continuous functions on $E$. Set
$$
\|\varphi\|_L=|\varphi|_\infty+L_\varphi,\ \ \varphi\in Lip_b(E),
$$
here $|\cdot|_\infty$ is the sup norm and $L_\varphi$ is the Lipschitz constant of $\varphi$. The space of probability meaures on $E$
is denoted by $\mathcal{P}(E)$. It is endowed with the Wasserstein norm
$$
\|\mu\|_*=\sup_{\varphi\in Lip_b(E),\|\varphi\|_L\leq 1}|\int_E\varphi(u)\mu(du)|,\ \ \mu\in \mathcal{P}(E).
$$

Let $(U,|\cdot|_U)$ and $(\mathbb{V},\|\cdot\|_\mathbb{V})$ be the two Hilbert spaces introduced before. We consider a Markov process $\Upsilon$ living in $\mathbb{V}$
and depending measurably on a cylindrical Wiener process $W$ on $U$. $\Upsilon$ can be written as
$$
\Upsilon(t)=\Upsilon(t,W,x_0),
$$
where $x_0$ is the initial value $\Upsilon(0,W,x_0)=x_0$. We denote the distribution of $\Upsilon(\cdot,W,x_0)$ by $\mathcal{D}(\Upsilon(\cdot,W,x_0))$,
and assume that $\mathcal{D}(\Upsilon(\cdot,W,x_0))$ is measurable with respect to $x_0$. Let $(\mathcal{P}_t)_{t\geq 0}$ be the Markov
transition semigroup associated with the Markov family $(\Upsilon(\cdot,W,x_0))_{x_0\in {\mathbb{V}}}$.

The basis idea behind the criterion in \cite{Odasso} is to construct an auxiliary process $\widetilde{\Upsilon}(t,W,x_0,\widetilde{x}_0)$, which is
``close" to $\Upsilon(t,W,x_0)$ and has a law absolutely continuous with respect to $\mathcal{D}(\Upsilon(\cdot,W,\widetilde{x}_0))$. More
precisely, suppose that there exists a function
$$
\widetilde{\Upsilon}:\ [0,\infty)\times C([0,\infty); \mathbb{R})^\mathbb{N}\times {\mathbb{V}}\times {\mathbb{V}}\rightarrow {\mathbb{V}},
$$
satisfying the following conditions.

\begin{itemize}
\item[\textbf{(A)}] For every $x_0,\widetilde{x}_0\in {\mathbb{V}}$, $\widetilde{\Upsilon}(\cdot,W,x_0,\widetilde{x}_0)$ is non-anticipative and measurable with respect to $W$.
Moreover,
$$
(\Upsilon(t),\widetilde{\Upsilon}(t))=(\Upsilon(t,W,x_0),\widetilde{\Upsilon}(t,W,x_0,\widetilde{x}_0))
$$
defines an homogenous Markov process and its law $\mathcal{D}(\Upsilon,\widetilde{\Upsilon})$ is measurable with respect to $(x_0,\widetilde{x}_0)$.

\item[\textbf{(B)}] There exists a positive measurable function $\mathcal{H}:{\mathbb{V}}\rightarrow\mathbb{R}^+$ and a positive constant $\gamma$ such that
for any $x_0\in {\mathbb{V}}$, $t\geq 0$, $\beta>0$ and any stopping time
$\tau\geq 0$, there exists $C_1, C'_\beta>0$ satisfying
\begin{eqnarray*}
\left\{
 \begin{array}{ll}
 & \hbox{$E\Big(\mathcal{H}(\Upsilon(t,W,x_0))\Big)\leq e^{-\gamma t}\mathcal{H}(x_0)+C_1$;} \\
 & \hbox{$E\Big(e^{-\beta \tau}\mathcal{H}(\Upsilon(\tau,W,x_0))1_{\tau<\infty}\Big)\leq\mathcal{H}(x_0)+C'_\beta$;}
 \end{array}
\right.
\end{eqnarray*}

\item[\textbf{(C)}] There exists a function $h:{\mathbb{V}}\times {\mathbb{V}}\rightarrow U$ such that for any $(t,x^1_0,x^2_0)\in [0,\infty)\times {\mathbb{V}}\times {\mathbb{V}}$
and  cylindrical Wiener process $W$ on $U$, we have almost surely
$$
\widetilde{\Upsilon}(t,W,x_0^1,x_0^2)=\Upsilon\Big(t, W+\int_0^\cdot h\Big(\Upsilon(s,W,x_0^1),\widetilde{\Upsilon}(s,W,x_0^1,x_0^2)\Big)ds,x_0^2\Big);
$$

\item[\textbf{(D)}] For any $x^1_0,x_0^2\in {\mathbb{V}}$ satisfying
\begin{eqnarray}\label{Eq: H}
\mathcal{H}(x_0^1)+\mathcal{H}(x_0^2)\leq 2C_1,
\end{eqnarray}
and for any  cylindrical Wiener processes $W_1,W_2$ on $U$, set $$h(t)=h(\Upsilon(t,W_1,x_0^1),\widetilde{\Upsilon}(t,W_1,x_0^1,x_0^2)),$$
there exists $\gamma_0>0$ such that
\begin{itemize}
\item[\textbf{(D1)}] there exists $C>0$ such that
\begin{eqnarray*}
& &\mathbb{P}\Big(|\Upsilon(t,W_2,x_0^2)-\Upsilon(t,W_1,x_0^1)|_{\mathbb{V}}\geq Ce^{-{\gamma_0} t}, \widetilde{\Upsilon}(\cdot,W_1,x_0^1,x_0^2)=\Upsilon(\cdot,W_2,x_0^2)\ on\ [0,t]\Big)\\
&&\leq
Ce^{-{\gamma_0} t},\ \ \forall t\geq 0;
\end{eqnarray*}

\item[\textbf{(D2)}] for any $t_0\geq 0$ and any stopping time $\tau\geq t_0$, we have
\begin{eqnarray*}
&&\mathbb{P}\Big(\int_{t_0}^\tau|h(t)|^2_Udt\geq Ce^{-{\gamma_0} t_0}\ and\ \widetilde{\Upsilon}(\cdot,W_1,x_0^1,x_0^2)=\Upsilon(\cdot,W_2,x_0^2)\ on\ [0,\tau]\Big)\\
&&
\leq
Ce^{-{\gamma_0} t_0};
\end{eqnarray*}

\item[\textbf{(D3)}] there exists $p_1>0$ such that
$$
\mathbb{P}\Big(\int_{0}^{+\infty}|h(t)|^2_Udt\leq C\Big)\geq p_1.
$$
\end{itemize}

\end{itemize}

Here is the criteria obtained \cite{Odasso}.
\begin{thm}\label{Thm biaozhun}
Under the assumptions \textbf{(A)}--\textbf{(D)}, there exists a unique stationary probability measure $\mu$ of $(\mathcal{P}_t)_{t\in\mathbb{R}^+}$
on ${\mathbb{V}}$, satisfying
\begin{eqnarray*}
\int_{\mathbb{V}}\mathcal{H}(u)d\mu(u)<\infty,
\end{eqnarray*}
and there exist $C,\gamma'>0$ such that for any $\lambda\in\mathcal{P}({\mathbb{V}})$
\begin{eqnarray*}
\|\mathcal{P}^*_t\lambda-\mu\|_*\leq Ce^{-\gamma' t}\Big(1+\int_{\mathbb{V}}\mathcal{H}(u)d\lambda(u)\Big).
\end{eqnarray*}
\end{thm}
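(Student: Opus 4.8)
The plan is to prove Theorem \ref{Thm biaozhun} by the coupling method, using the auxiliary process $\widetilde{\Upsilon}$ together with Girsanov's theorem to manufacture a coupling of two copies of the dynamics that contracts exponentially in the Wasserstein norm $\|\cdot\|_*$. The starting point is the Kantorovich--Rubinstein duality: for any initial points $x_0^1,x_0^2\in\mathbb{V}$ and any coupling $(\Upsilon^1,\Upsilon^2)$ of the laws $\mathcal{P}_t^*\delta_{x_0^1}$ and $\mathcal{P}_t^*\delta_{x_0^2}$, one has
$$
\|\mathcal{P}_t^*\delta_{x_0^1}-\mathcal{P}_t^*\delta_{x_0^2}\|_*\leq E\big[1\wedge|\Upsilon^1(t)-\Upsilon^2(t)|_\mathbb{V}\big],
$$
so the whole problem reduces to building a single good coupling. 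First I would dispose of existence and integrability using the drift inequality in \textbf{(B)}: iterating $E[\mathcal{H}(\Upsilon(t))]\leq e^{-\gamma t}\mathcal{H}(x_0)+C_1$ yields tightness of the time-averaged laws, hence via Krylov--Bogolyubov and the measurability structure of \textbf{(A)} an invariant measure $\mu$ with $\int_\mathbb{V}\mathcal{H}\,d\mu\leq C_1/(1-e^{-\gamma})<\infty$.

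The heart of the argument is the coupling construction, which I expect to be the main obstacle. Assume first that $x_0^1,x_0^2$ satisfy \eqref{Eq: H}, i.e. $\mathcal{H}(x_0^1)+\mathcal{H}(x_0^2)\leq 2C_1$. Run $\Upsilon^1(t)=\Upsilon(t,W_1,x_0^1)$ together with the auxiliary process $\widetilde{\Upsilon}(t)=\widetilde{\Upsilon}(t,W_1,x_0^1,x_0^2)$; by \textbf{(C)}, $\widetilde{\Upsilon}$ is precisely the solution started at $x_0^2$ driven by the shifted noise $W_1+\int_0^\cdot h(s)\,ds$. Let $G_t$ denote the associated Girsanov density on $[0,t]$; on $\{\int_0^\infty|h(s)|_U^2\,ds<\infty\}$ the limit $G_\infty$ exists. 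A maximal-coupling construction on an enlarged space then produces $\Upsilon^2$ with law $\mathcal{P}_\cdot^*\delta_{x_0^2}$ such that the event $\{\Upsilon^2\equiv\widetilde{\Upsilon}\text{ on }[0,\infty)\}$ occurs with probability bounded below in terms of $G_\infty$. Condition \textbf{(D3)} supplies the uniform lower bound $\mathbb{P}(\int_0^\infty|h|_U^2\,ds\leq C)\geq p_1$, and on this event the quadratic variation of the density martingale is bounded, which delivers a uniform one-shot coupling-success probability $p_*>0$. Condition \textbf{(D2)}, giving $\int_{t_0}^\tau|h|_U^2\,ds$ an exponential-in-$t_0$ tail on the coupled event, is what guarantees that once coupling begins it costs negligible further control energy to sustain, so the density stays integrable and the coupling persists to $[0,\infty)$.

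Granting the coupling, I would extract the exponential rate through a renewal structure. On the successful event $\{\widetilde{\Upsilon}(\cdot,W_1,x_0^1,x_0^2)=\Upsilon(\cdot,W_2,x_0^2)\text{ on }[0,t]\}$, condition \textbf{(D1)} forces $|\Upsilon^1(t)-\Upsilon^2(t)|_\mathbb{V}\leq Ce^{-\gamma_0 t}$ outside a set of probability $\leq Ce^{-\gamma_0 t}$, so the coupled contribution to the Wasserstein distance is exponentially small. If an attempt fails, both copies may leave the set \eqref{Eq: H}, but the second inequality of \textbf{(B)}, namely $E[e^{-\beta\tau}\mathcal{H}(\Upsilon(\tau))1_{\tau<\infty}]\leq\mathcal{H}(x_0)+C'_\beta$, controls both the return times to that set and the growth of $\mathcal{H}$ during excursions. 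Defining stopping times $\tau_k$ at which the pair $(\Upsilon^1,\Upsilon^2)$ re-enters \eqref{Eq: H} and restarting the coupling attempt at each $\tau_k$ with success probability $\geq p_*$, the number of attempts until success is geometric while the $\tau_k$ have exponential tails; hence the first sustained coupling time $T$ satisfies $\mathbb{P}(T>t)\leq Ce^{-ct}$.

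Combining the last two steps, for $x_0^1,x_0^2$ satisfying \eqref{Eq: H} one obtains
$$
E\big[1\wedge|\Upsilon^1(t)-\Upsilon^2(t)|_\mathbb{V}\big]\leq \mathbb{P}(T>t/2)+Ce^{-\gamma_0 t/2}\leq Ce^{-\gamma' t}.
$$
For general initial data I would first wait, using the drift inequality of \textbf{(B)}, for both copies to enter the set \eqref{Eq: H}, which introduces the factor $1+\mathcal{H}(x_0^1)+\mathcal{H}(x_0^2)$. Uniqueness of $\mu$ is then immediate from this contraction, and integrating the Dirac estimate against $\lambda$ and $\mu$ and invoking $\int_\mathbb{V}\mathcal{H}\,d\mu<\infty$ yields $\|\mathcal{P}_t^*\lambda-\mu\|_*\leq Ce^{-\gamma' t}(1+\int_\mathbb{V}\mathcal{H}\,d\lambda)$, as claimed. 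The delicate point throughout is the interplay between \textbf{(D2)} and \textbf{(D3)}: the former keeps the control energy (hence the density martingale) from accumulating after coupling starts, while the latter guarantees the baseline positive probability, and reconciling these uniformly over the set \eqref{Eq: H} is the technical crux.
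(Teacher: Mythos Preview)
The paper does not prove Theorem~\ref{Thm biaozhun} at all: it is quoted verbatim as the general criterion ``obtained in \cite{Odasso}'' and is used as a black box. So there is no ``paper's own proof'' to compare against; the only content in the paper relating to this statement is the sentence preceding it and the citation.

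Your proposal is, in outline, a faithful reconstruction of Odasso's original coupling argument in \cite{Odasso}: Kantorovich--Rubinstein duality, a Girsanov-based maximal coupling of $\Upsilon(\cdot,W_2,x_0^2)$ with the auxiliary process $\widetilde{\Upsilon}$ (using \textbf{(C)} and \textbf{(D3)} to get a uniform success probability, \textbf{(D2)} to keep the density martingale under control), \textbf{(D1)} for contraction on the coupled event, and the Lyapunov estimates in \textbf{(B)} to control return times and close a geometric/renewal argument. As a high-level sketch this is correct and matches the structure of the cited reference. If you intend to reproduce the full proof rather than cite it, the step that requires the most care is the actual construction of the coupling on path space with the stated lower bound on the success probability (this is where Odasso spends most of his effort), and the verification that the renewal times have the claimed exponential tails uniformly over the set \eqref{Eq: H}; your sketch names these but does not carry them out. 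For the purposes of the present paper, however, the intended ``proof'' is simply the reference to \cite{Odasso}.
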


\subsection{The proof}

As a part of the proof, we will prepare a number of estimates for the solutions of the equation (\ref{01}).

From now on, we denote by $C$ any generic constant which may change from one line to another.

Set $\mathbb{W}_M=Span(e_1,\cdots,e_M)$. Let $u^M\in\mathbb{W}_M$ be the Galerkin approximations of (\ref{01}) satisfying
\begin{eqnarray}\label{Eq Galerkin}
& &d(u^M,e_i)_\mathbb{V}+\nu((u^M,e_i))dt+b(u^M,u^M,e_i)dt-\alpha b(u^M,\triangle u^M,e_i)dt+\alpha b(e_i,\triangle u^M,u^M)dt\nonumber\\
&=&
 (\phi(u^M),e_i)dW(t),\ i\in\{1,2,\cdots,M\},
\end{eqnarray}
where the notation $(\phi(u),e_i)$ stands for the operator in $\mathcal{L}(U,\mathbb{R})$ defined by
$$
(\phi(u),e_i)h=(\phi(u)h,e_i),\ \ \forall h\in U.
$$
Then
\begin{eqnarray*}
\|(\phi(u),e_i)\|^2_{\mathcal{L}_2(U,\mathbb{R})}
&=&
\sum_{j=1}^\infty(\phi(u)\bar{e}_j,e_i)^2
\leq
\|e_i\|^2_\mathbb{H}\sum_{j=1}^\infty\|\phi(u)\bar{e}_j\|^2_\mathbb{H}\\
&\leq&
C\|e_i\|^2_\mathbb{H}\sum_{j=1}^\infty\|\phi(u)\bar{e}_j\|^2_\mathbb{V}
\leq
C\|\phi(u)\|^2_{\mathcal{L}_2(U,\mathbb{V})}.
\end{eqnarray*}

We have the following result for $u^M$.
\begin{lem}\label{Lem 01}
Assume {\bf (H0)} holds. There exist $C_1$ and $C^1_\beta$ only depending on $\nu,\ \alpha,\ R$ and $\mathcal{O}$ such that
, for any $t\geq0$, any $\beta>0$ and any stopping time $\tau$,
\begin{eqnarray}\label{eq 01 lem 01}
E(\|u^M(t)\|^2_\mathbb{W})\leq e^{-\frac{\nu t}{\mathcal{P}^2+\alpha}}\|u^M_0\|^2_\mathbb{W}+C_1
\end{eqnarray}
and
\begin{eqnarray}\label{eq 02 lem 01}
E(e^{-\beta\tau}\|u^M(\tau)\|^2_\mathbb{W}I_{\tau<\infty})\leq \|u^M_0\|^2_\mathbb{W}+C^1_\beta.
\end{eqnarray}

\end{lem}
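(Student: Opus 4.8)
The plan is to apply Itô's formula to the square of the $\mathbb{W}$-norm of the Galerkin approximation $u^M$ and then use the structural cancellations of the trilinear form $b$ together with the dissipativity of the Stokes term to derive a differential inequality of the form $d\,E(\|u^M(t)\|_\mathbb{W}^2)\le -c\,E(\|u^M(t)\|_\mathbb{W}^2)\,dt + C\,dt$, after which Gronwall's lemma yields \eqref{eq 01 lem 01}. First I would write, using the eigenfunction relation \eqref{Basis}, $\|u^M\|_\mathbb{W}^2=\sum_{i\le M}\lambda_i (u^M,e_i)_\mathbb{V}^2$, or more conveniently work directly with the process $u^M$ viewed in $\mathbb{W}_M$ and apply Itô to $\|u^M(t)\|_\mathbb{W}^2$. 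The drift terms coming from \eqref{Eq Galerkin} are: the dissipative term $-2\nu((u^M,u^M))$-type contribution, the trilinear terms $-2 b(u^M,u^M,\cdot)+2\alpha b(u^M,\triangle u^M,\cdot)-2\alpha b(e_i,\triangle u^M,u^M)$ paired against the appropriate test element, and the Itô correction $\sum_i\|(\phi(u^M),e_i)\|_{\mathcal{L}_2(U,\mathbb{R})}^2$ which by the computation displayed just before the lemma is bounded by $C\|\phi(u^M)\|_{\mathcal{L}_2(U,\mathbb{V})}^2\le CR$ under \textbf{(H0)}.

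The key algebraic point is that the sum of the trilinear contributions collapses: using the identity \eqref{curl} relating $(\operatorname{curl}\Phi)\times v$ to $b(v,\Phi,w)-b(w,\Phi,v)$ and the antisymmetry $b(u,v,v)=0$ for $u\in\mathbb{V}$, the term $(B(u^M,u^M),\text{test})$ contributes nothing to the energy balance in the right test-function pairing — exactly the mechanism behind the a priori bounds in \cite{RS-12}. Concretely, testing the equation against the element of $\mathbb{W}$ whose $(\cdot,\cdot)_\mathbb{V}$-pairing reproduces the $\mathbb{W}$-norm (via \eqref{Basis}, this amounts to weighting by $\lambda_i$), one uses \eqref{Ineq B 02} and \eqref{W-02} to control the residual curl-nonlinearity, and absorbs it into the viscous dissipation. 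The dissipation is quantified by \eqref{Eq 01}: $\|u^M\|^2\ge(\mathcal P^2+\alpha)^{-1}\|u^M\|_\mathbb{V}^2$, which is what produces the rate $\nu/(\mathcal P^2+\alpha)$ in the exponent; one also needs an equivalence of the $\mathbb{W}$-norm with the $\mathbb{H}^3$-norm controlling $\|u^M\|_\mathbb{V}^2$ from above to close the loop $-c\|u^M\|_\mathbb{W}^2 \le -c'\|u^M\|_\mathbb{V}^2$ where dissipation acts. After taking expectations the martingale part vanishes, and one obtains $\frac{d}{dt}E\|u^M(t)\|_\mathbb{W}^2 \le -\frac{\nu}{\mathcal P^2+\alpha}E\|u^M(t)\|_\mathbb{W}^2 + C_1$ with $C_1$ depending only on $\nu,\alpha,R,\mathcal{O}$; Gronwall then gives \eqref{eq 01 lem 01}.

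For \eqref{eq 02 lem 01}, I would apply Itô's formula to $e^{-\beta t}\|u^M(t)\|_\mathbb{W}^2$ instead. This produces the extra favorable term $-\beta e^{-\beta t}\|u^M(t)\|_\mathbb{W}^2$, which only helps, so the same cancellations give $E\big(e^{-\beta\tau}\|u^M(\tau)\|_\mathbb{W}^2\big)\le \|u^M_0\|_\mathbb{W}^2 + C\int_0^\infty e^{-\beta s}\,ds\cdot(\text{const}) = \|u^M_0\|_\mathbb{W}^2+C_\beta^1$ for bounded stopping times $\tau$; for general $\tau$ one applies this to $\tau\wedge n$ and passes to the limit by Fatou on the left and monotone/dominated convergence on the right, using the indicator $I_{\tau<\infty}$. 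I expect the main obstacle to be the rigorous verification that, at the Galerkin level, the nonlinear terms genuinely cancel (or are dominated) in the $\mathbb{W}$-norm energy identity rather than merely in the $\mathbb{V}$-norm one — the projection $P_M$ does not commute with $\operatorname{curl}(I-\alpha\triangle)$, so one must be careful that testing \eqref{Eq Galerkin} against the correct linear combination of the $e_i$ reproduces $\|u^M\|_\mathbb{W}^2$ and that the leftover trilinear terms are controlled by \eqref{Ineq B 02}, Young's inequality, and \eqref{W-02}, with all constants tracked to be independent of $M$. This is precisely the "non-trivial exponential integrability" the introduction alludes to, though here only the second-moment (non-exponential) version is needed.
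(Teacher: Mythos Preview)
Your overall strategy---It\^o on $\|u^M\|_\mathbb{W}^2$, then Gronwall, then the $e^{-\beta t}$-weighted version for the stopping-time bound---matches the paper's, and your treatment of \eqref{eq 02 lem 01} is essentially correct. The gap is in the handling of the nonlinearity in the $\mathbb{W}$-energy balance.

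You propose to control the residual curl-nonlinearity via \eqref{Ineq B 02}. But when you test the Galerkin system against the element reproducing $\|u^M\|_\mathbb{W}^2$ (weights $\lambda_i^2$, not $\lambda_i$), the nonlinear contribution is of the form $(B(u^M,u^M),w)$ with $w\neq u^M$, and \eqref{Ineq B 02} gives only $\Theta\|u^M\|_\mathbb{V}^2\|w\|_\mathbb{W}$, a term cubic in $u^M$. This cannot be absorbed into the quadratic dissipation $-c\|u^M\|_\mathbb{W}^2$ without an a~priori bound you do not yet have; the estimate does not close. The antisymmetry $b(u,v,v)=0$ kills the nonlinearity only in the $\mathbb{V}$-balance (weights $\lambda_i$, test function $u^M$ itself), not in the $\mathbb{W}$-balance.

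The paper resolves this by splitting $\|u^M\|_\mathbb{W}^2=\|u^M\|_\mathbb{V}^2+\|u^M\|_*^2$ with $\|v\|_*=|\operatorname{curl}(v-\alpha\triangle v)|$ and deriving a separate equation for $\|u^M\|_*^2$. The crucial mechanism is the vorticity transport identity
\[
\operatorname{curl}\bigl(\operatorname{curl}(u^M-\alpha\triangle u^M)\times u^M\bigr)=(u^M\cdot\nabla)\,\operatorname{curl}(u^M-\alpha\triangle u^M),
\]
which, combined with $\operatorname{div}\,u^M=0$, gives $\bigl(\operatorname{curl}(u^M-\alpha\triangle u^M),\operatorname{curl}\,B(u^M,u^M)\bigr)=0$ \emph{exactly}. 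No inequality like \eqref{Ineq B 02} is invoked here; the nonlinearity disappears from the $\|\cdot\|_*$-balance altogether. The only residual is the linear cross term $\bigl(\operatorname{curl}(u^M-\alpha\triangle u^M),\operatorname{curl}\,u^M\bigr)$ coming from the viscous part $-\nu\triangle u^M$, and that is handled by Young's inequality and \eqref{Eq 9}.

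A second point you underestimate is the It\^o correction. In the $\mathbb{W}$-balance the correction carries weights $\lambda_i^2$, so the computation displayed just before the lemma does not directly apply. The paper introduces $\widetilde{G}(u^M)=(I+\alpha A)^{-1}\phi(u^M)$ via Lemma~\ref{Lem GS}, so that $(\phi(u^M),e_i)=(\widetilde{G}(u^M),e_i)_\mathbb{V}=\lambda_i^{-1}(\widetilde{G}(u^M),e_i)_\mathbb{W}$; the bound $\|\widetilde{G}(u^M)\|_{\mathcal{L}_2(U,\mathbb{W})}\le K\|\phi(u^M)\|_{\mathcal{L}_2(U,\mathbb{V})}\le K\sqrt{R}$ then controls both the $\lambda_i$- and $\lambda_i^2$-weighted sums uniformly in $M$. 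The same device is used to recast the drift in $(\cdot,\cdot)_\mathbb{W}$-form before subtracting off the $\mathbb{V}$-identity to isolate $\|u^M\|_*^2$.
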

\begin{proof}
Applying ${\rm It\hat{o}}$'s formula, we have
\begin{eqnarray}\label{eq 011}
& &d(u^M,e_i)^2_{\mathbb{V}}\nonumber\\
& &+
2(u^M,e_i)_{\mathbb{V}}\Big[\nu((u^M,e_i))+b(u^M,u^M,e_i)-\alpha b(u^M,\triangle u^M,e_i)+\alpha b(e_i,\triangle u^M,u^M)\Big]dt\nonumber\\
&=&
2(u^M,e_i)_{\mathbb{V}}(\phi(u^M),e_i)dW(t)+|(\phi(u^M),e_i)|^2_{\mathcal{L}_2(U,\mathbb{R})} dt.
\end{eqnarray}
Notice that $\|u^M\|^2_\mathbb{V}=\sum_{i=1}^M\lambda_i(u^M,e_i)^2_\mathbb{V}$. Multiplying \label{eq 011} by $\lambda_i$ and taking summation over $i$, we get
\begin{eqnarray}\label{Eq G ito}
d\|u^M\|^2_\mathbb{V}+2\nu\|u^M\|^2dt
=
2(\phi(u^M),u^M)dW(t)+\sum_{i=1}^M\lambda_i|(\phi(u^M),e_i)|^2_{\mathcal{L}_2(U,\mathbb{R})} dt
\end{eqnarray}
here we used the fact that $b(u^M,u^M,u^M)=0$.

Let $\widetilde{G}(u^M(t))$ be the operator in $\mathcal{L}_2(U,\mathbb{W})$ defined as follows. For any $h\in U$,
$\widetilde{G}(u^M(t))\times h\in\mathbb{W}$ is the unique solution of the following equation.
\begin{eqnarray*}
\widetilde{G}(u^M(t))\times h-\alpha \triangle \Big(\widetilde{G}(u^M(t))\times h\Big)=\phi(u^M(t))\times h\ {\rm in}\ \mathcal{O},\nonumber\\
%{\rm div}\ \widetilde{G}(u^M)=0\ {\rm in}\ \mathcal{O},\\
\widetilde{G}(u^M(t))\times h=0\ {\rm on}\ \partial \mathcal{O}.\nonumber
\end{eqnarray*}
By Lemma \ref{Lem GS}, such a solution uniquely exits. Moreover,
\begin{eqnarray}\label{eq:G}
(\widetilde{G}(u^M(t))\times h,e_i)_\mathbb{V}=(\phi(u^M(t))\times h,e_i),\ \ \forall i\in\{1,2,\cdots,M\},
\end{eqnarray}
and there exists a positive constant $K$ such that
$$
\|\widetilde{G}(u^M(t))\times h\|_{\mathbb{W}}
\leq
K\|\phi(u^M(t))\times h\|_{\mathbb{V}},
$$
which implies that
$$
\|\widetilde{G}(u^M(t))\|^2_{\mathcal{L}_2(U,\mathbb{W})}
\leq
K^2\|\phi(u^M(t))\|^2_{\mathcal{L}_2(U,\mathbb{V})}.
$$

Hence by (\ref{Basis}),
\begin{eqnarray}\label{Eq ito 04}
& &\sum_{i=1}^M\lambda_i|(\phi(u^M(t)),e_i)|^2_{\mathcal{L}_2(U,\mathbb{R})}\nonumber\\
&=&
\sum_{i=1}^M\lambda_i|(\widetilde{G}(u^M(t)),e_i)_\mathbb{V}|^2_{\mathcal{L}_2(U,\mathbb{R})}\nonumber\\
&=&
\sum_{i=1}^M\frac{1}{\lambda_i}|(\widetilde{G}(u^M(t)),e_i)_\mathbb{W}|^2_{\mathcal{L}_2(U,\mathbb{R})}\nonumber\\
&\leq&
\frac{1}{\lambda_1}\|\widetilde{G}(u^M(t))\|^2_{\mathcal{L}_2(U,\mathbb{W})}\nonumber\\
&\leq&
\frac{K^2}{\lambda_1}\|\phi(u^M(t))\|^2_{\mathcal{L}_2(U,\mathbb{V})}\nonumber\\
&\leq&
\frac{K^2}{\lambda_1}R.
\end{eqnarray}
Combining this with (\ref{Eq G ito}), we obtain
\begin{eqnarray}\label{Eq lem 01}
d\|u^M\|^2_\mathbb{V}+2\nu\|u^M\|^2dt
\leq
2(\phi(u^M),u^M)dW(t)+\frac{K^2}{\lambda_1}Rdt.
\end{eqnarray}

Denote $\|v\|_*=|curl(v-\alpha\triangle v)|$ for any $v\in\mathbb{W}$. Next we estimate $\|u^M\|_*$.

Setting
$$
\Psi(u^M)=-\nu\triangle u^M+curl(u^M-\alpha\triangle u^M)\times u^M,
$$
we have
$$
d(u^M,e_i)_\mathbb{V}+(\Psi(u^M),e_i)dt=(\phi(u^M),e_i)dW(t).
$$

Noting $\Psi(u^M)\in\mathbb{H}^1(\mathcal{O})$. By Lemma \ref{Lem GS}, there exists unique solution $v^M\in\mathbb{W}$ satisfying

\begin{eqnarray*}
\left\{
 \begin{array}{ll}
 & \hbox{$v^M-\alpha \triangle v^M=\Psi(u^M)\ {\rm in}\ \mathcal{O}$;} \\
 & \hbox{$v^M=0\ {\rm on}\ \partial \mathcal{O}.$}
 \end{array}
\right.
\end{eqnarray*}
Moreover,
$$
(v^M,e_i)_\mathbb{V}=(\Psi(u^M),e_i),\ \ \forall i\in\{1,2,\cdots,M\}.
$$
Thus
\begin{eqnarray}\label{eq 02}
d(u^M,e_i)_\mathbb{V}+(v^M,e_i)_\mathbb{V}dt= (\phi(u^M),e_i)dW(t).
\end{eqnarray}

By (\ref{eq:G}) and (\ref{Basis}), we have
$$
\lambda_i(\phi(u^M),e_i)=(\widetilde{G}(u^M),e_i)_{\mathbb{W}}.
$$
Multiplying $\lambda_i$ to (\ref{eq 02}), it follows that
$$
d(u^M,e_i)_\mathbb{W}+(v^M,e_i)_\mathbb{W}dt= (\widetilde{G}(u^M),e_i)_{\mathbb{W}}dW(t).
$$
Applying ${\rm It\hat{o}}$'s formula, we have
\begin{eqnarray*}
&  &d(u^M,e_i)^2_\mathbb{W}+2(u^M,e_i)_\mathbb{W}(v^M,e_i)_\mathbb{W}dt\\
&=&
2 (u^M,e_i)_\mathbb{W}(\widetilde{G}(u^M),e_i)_{\mathbb{W}}dW(t)
+
|(\widetilde{G}(u^M),e_i)_{\mathbb{W}}|^2_{\mathcal{L}_2(U,\mathbb{R})}dt,
\end{eqnarray*}
and hence
\begin{eqnarray*}
&  &d\|u^M\|^2_\mathbb{W}+2(v^M,u^M)_\mathbb{W}dt\\
&=&
2 (\widetilde{G}(u^M),u^M)_{\mathbb{W}}dW(t)
+
\sum_{i=1}^M|(\widetilde{G}(u^M),e_i)_{\mathbb{W}}|^2_{\mathcal{L}_2(U,\mathbb{R})}dt.
\end{eqnarray*}
By (\ref{W}) we rewrite the above
equation as follows
\begin{eqnarray*}
&  &d[\|u^M\|^2_\mathbb{V}+\|u^M\|^2_*]+2\Big[(v^M,u^M)_\mathbb{V}+\Big(curl(u^M-\alpha \triangle u^M), curl(v^M-\alpha \triangle v^M)\Big)\Big]dt\\
&=&
2 (\widetilde{G}(u^M),u^M)_{\mathbb{V}}dW(t)
+
\sum_{i=1}^M|(\widetilde{G}(u^M),e_i)_{\mathbb{W}}|^2_{\mathcal{L}_2(U,\mathbb{R})}dt\\
& &+
2\Big(curl(u^M-\alpha \triangle u^M), curl(\widetilde{G}(u^M)-\alpha \triangle \widetilde{G}(u^M))\Big)dW(t).
\end{eqnarray*}
Using the definition of $v^M$ and $\widetilde{G}$ (see (\ref{eq:G})), we obtain
\begin{eqnarray*}
&  &d[\|u^M\|^2_\mathbb{V}+\|u^M\|^2_*]+2\Big[(\Psi(u^M),u^M)+\Big(curl(u^M-\alpha \triangle u^M), curl(\Psi(u^M))\Big)\Big]dt\\
&=&
2 (\phi(u^M),u^M)dW(t)
+
\sum_{i=1}^M\lambda_i^2|(\phi(u^M),e_i)|^2_{\mathcal{L}_2(U,\mathbb{R})}dt\\
& &+
2 \Big(curl(u^M-\alpha \triangle u^M), curl(\phi(u^M))\Big)dW(t).
\end{eqnarray*}
Subtracting (\ref{Eq G ito}) from the above equation, we obtain
\begin{eqnarray}\label{eq 04}
&  &d\|u^M\|^2_*+2\Big(curl(u^M-\alpha \triangle u^M), curl(\Psi(u^M))\Big)dt\\
&=&
\sum_{i=1}^M(\lambda_i^2-\lambda_i)|(\phi(u^M),e_i)|^2_{\mathcal{L}_2(U,\mathbb{R})}dt\nonumber\\
&&+
2 \Big(curl(u^M-\alpha \triangle u^M), curl(\phi(u^M))\Big)dW(t),\nonumber
\end{eqnarray}
here we used the fact $2(\Psi(u^M),u^M)=2\nu\|u^M\|^2$.

Since
$$
curl\Big(curl(u^M-\alpha\triangle u^M)\times u^M\Big)
=
(u^M\cdot \nabla)(curl(u^M-\alpha\triangle u^M)),
$$
we have $\Big(curl(u^M-\alpha \triangle u^M),curl\Big(curl(u^M-\alpha\triangle u^M)\times u^M\Big)\Big)=0$.
Hence
\begin{eqnarray*}
& &\Big(curl(u^M-\alpha \triangle u^M), curl(\Psi(u^M))\Big)\\
&=&
 \Big(curl(u^M-\alpha \triangle u^M),curl(-\nu \triangle u^M)\Big)
 \\
&=&
\frac{\nu}{\alpha}\|u^M\|^2_*-\frac{\nu}{\alpha}\Big(curl(u^M-\alpha \triangle u^M),curl\ u^M\Big).
%&&-
%\Big(curl(u^M-\alpha \triangle u^M),F(u^M,t)+G(u^M)\dot{h^\e}(t)\Big).
\end{eqnarray*}
It follows from (\ref{eq 04}) that
\begin{eqnarray}\label{eq 000}
&  &d\|u^M\|^2_*+\frac{2\nu}{\alpha}\|u^M\|^2_*dt-\frac{2\nu}{\alpha}\Big(curl(u^M-\alpha \triangle u^M),curl\ u^M\Big)dt\nonumber\\
%& &-2\Big(curl(u^M-\alpha \triangle u^M),F(u^M,t)+G(u^M,t)\dot{h^\e}(t)\Big)dt\nonumber\\
&=&
\sum_{i=1}^M(\lambda_i^2-\lambda_i)|(\phi(u^M),e_i)|^2_{\mathcal{L}_2(U,\mathbb{R})}dt\nonumber\\
&&+
2 \Big(curl(u^M-\alpha \triangle u^M), curl(\phi(u^M))\Big)dW(t).
\end{eqnarray}

Using the fact that
\begin{eqnarray}\label{Eq 9}
|curl(u)|^2\leq \frac{2}{\alpha}\|u\|^2_\mathbb{V}\ \ \ \text{for any }u\in\mathbb{W},
\end{eqnarray}
we have
\begin{eqnarray}\label{Eq 001}
&  &\Big|\Big(curl(u^M-\alpha \triangle u^M),curl\ u^M\Big)\Big|\nonumber\\
&\leq&
\sqrt{\frac{2}{\alpha}}\|u^M(s)\|_\mathbb{V}\|u^M(s)\|_*\nonumber\\
&\leq&
\frac{1}{2}\|u^M(s)\|^2_*ds+\frac{2}{\alpha}\|u^M(s)\|^2_\mathbb{V}.
\end{eqnarray}
Using similar arguments as that for (\ref{Eq ito 04}), we have
\begin{eqnarray}\label{Eq 004}
\sum_{i=1}^M(\lambda_i^2+\lambda_i)|(\phi(u^M(s)),e_i)|^2_{\mathcal{L}_2(U,\mathbb{R})}
\leq
(1+\frac{1}{\lambda_1})K^2R.
\end{eqnarray}

Combining (\ref{eq 000}), (\ref{Eq 001}) and (\ref{Eq 004}), we arrive at

\begin{eqnarray}\label{eq 005}
d\|u^M\|^2_*+\frac{\nu}{\alpha}\|u^M\|^2_*dt
&\leq&
\frac{4\nu}{\alpha^2}\|u^M\|^2_\mathbb{V}dt+(1+\frac{1}{\lambda_1})K^2Rdt\nonumber\\
&&+
2 \Big(curl(u^M-\alpha \triangle u^M), curl(\phi(u^M))\Big)dW(t).
\end{eqnarray}

By (\ref{Eq 01}), (\ref{Eq lem 01}) and (\ref{eq 005}), we obtain
%$(1+\frac{2(\mathcal{P}^2+\alpha)}{\alpha^2})$(\ref{Eq lem 01})+(\ref{eq 005})
\begin{eqnarray}\label{eq 006}
d\|u^M\|^2_\mathbb{W}+l\|u^M\|^2_\mathbb{W}dt
&\leq&
l_0Rdt
+
(2+\frac{4(\mathcal{P}^2+\alpha)}{\alpha^2})(\phi(u^M),u^M)dW(t)\nonumber\\
& &+
2 \Big(curl(u^M-\alpha \triangle u^M), curl(\phi(u^M))\Big)dW(t),
\end{eqnarray}

here $l=\frac{\nu}{\mathcal{P}^2+\alpha}$, $l_0=(1+\frac{2}{\lambda_1}+\frac{2(\mathcal{P}^2+\alpha)}{\lambda_1\alpha^2})K^2$.

Applying chain rule to $e^{lt}\|u^M\|^2_\mathbb{W}$ and taking the expectation, we obtain
\begin{eqnarray*}
E\|u^M(t)\|^2_{\mathbb{W}}
\leq
e^{-lt}\|u^M(0)\|^2_\mathbb{W}+\frac{l_0R}{l},
\end{eqnarray*}
which is the desired inequality (\ref{eq 01 lem 01}).

Let $\beta>0$ and $\tau$ be a stopping time. Applying ${\rm It\hat{o}}$'s formula to $e^{-\beta t}\|u^M(t)\|^2_\mathbb{W}$, we have
\begin{eqnarray}
& &d(e^{-\beta t}\|u^M\|^2_\mathbb{W})+e^{-\beta t}(\beta+l)\|u^M\|^2_\mathbb{W}dt\nonumber\\
&\leq&
e^{-\beta t}l_0Rdt
+
(2+\frac{4(\mathcal{P}^2+\alpha)}{\alpha^2})e^{-\beta t}(\phi(u^M),u^M)dW(t)\nonumber\\
& &+
2e^{-\beta t} \Big(curl(u^M-\alpha \triangle u^M), curl(\phi(u^M))\Big)dW(t).
\end{eqnarray}
This implies that, for any $n\in\mathbb{N}$,
\begin{eqnarray}
E\Big(e^{-\beta(\tau\wedge n)}\|u^M(\tau\wedge n)\|^2_\mathbb{W}\Big)
\leq
\|u^M(0)\|^2_\mathbb{W}+\frac{l_0R}{\beta}.
\end{eqnarray}
Letting $n\rightarrow \infty$, we obtain (\ref{eq 02 lem 01}).

The proof is complete.
\end{proof}

\vskip 0.3cm
Denote by $u(\cdot, W, u_0)$ the unique solution of (\ref{01}) with initial value $u_0$. By similar arguments as in the proof of
Theorem 4.2 in \cite{RS-12}, we have the following
lemma which will be used later.
\begin{lem}\label{lem 03}
The sequence of Galerkin approximations $(u^M)_{M\geq1}$ satisfies
$$
\lim_{M\rightarrow\infty}E(\|u^M(t)-u(t)\|^2_\mathbb{V})=0,\ t>0,
$$
$$
\lim_{M\rightarrow\infty}E(\int_0^T\|u^M(t)-u(t)\|^2_\mathbb{V}dt)=0.
$$
\end{lem}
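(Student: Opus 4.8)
The plan is to follow the standard two-step Galerkin convergence argument, using the difference equation for $r^M := u^M - u$ together with the a priori bounds already available. First I would write down the equation satisfied by $r^M$: subtracting Definition \ref{Def 01} (tested against $e_i$, $i\le M$) from the Galerkin system (\ref{Eq Galerkin}) does not immediately work because $u$ is not finite-dimensional, so instead I would proceed as in Theorem 4.2 of \cite{RS-12}: test the equation for $u$ against $P_M u - u^M$ and the equation for $u^M$ against the projection of $u$, and combine. More precisely, I would apply It\^o's formula to $\|u^M(t) - P_M u(t)\|_{\mathbb{V}}^2$ (or directly to $\|r^M(t)\|_{\mathbb{V}}^2$ using that $u\in L^p(\Omega; L^\infty([0,\infty);\mathbb{W}))$ so all terms make sense), obtaining an identity of the schematic form
\begin{eqnarray*}
\|r^M(t)\|_{\mathbb{V}}^2 + 2\nu\int_0^t \|r^M(s)\|^2 ds
&=& \|r^M(0)\|_{\mathbb{V}}^2 + \text{(nonlinear terms)} \\
& & {} + \text{(noise correction terms)} + \text{(martingale)},
\end{eqnarray*}
where $r^M(0) = P_M u_0 - u_0 \to 0$ in $\mathbb{V}$ by completeness of $\{e_i\}$ in $\mathbb{W}\subset\mathbb{V}$.

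Next I would estimate the nonlinear contribution. The trilinear terms $b(u^M,u^M,\cdot) - b(u,u,\cdot)$ and the $\alpha$-terms combine, via the identity (\ref{curl}), into expressions of the form $(B(u^M,u^M) - B(u,u), r^M)$ plus projection-error terms $(B(u,u), (P_M - I)u)$. Using the antisymmetry that makes $(B(v,v),v)$-type terms vanish and the estimate (\ref{Ineq B 02}) of Lemma \ref{Lem B}, the genuinely nonlinear part is controlled by $C\|u\|_{\mathbb{W}}\|r^M\|_{\mathbb{V}}^2$ (after using that $r^M$ lies in $\mathbb{W}$ and bounding the $\mathbb{W}$-norm of $r^M$ crudely, or better, by splitting $B(u^M,u^M)-B(u,u) = B(r^M,u^M)+B(u,r^M)$ and using (\ref{Ineq B 01}) so that only first powers of $\|r^M\|_{\mathbb{V}}$ times $\|u^M\|_{\mathbb{W}}$ or $\|u\|_{\mathbb{W}}$ appear against $\|r^M\|_{\mathbb{W}}$, then absorbing). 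The projection-error terms $(B(u,u),(P_M-I)u)$ and the analogous noise-correction terms $\|(\phi(u^M)-\phi(u))\cdots\|^2$ together with $\|(I-P_M)(\text{stuff involving }\phi(u))\|^2$ are handled by \textbf{(H0)} (giving $\|\phi(u^M)-\phi(u)\|_{\mathcal{L}_2(U,\mathbb{V})}^2 \le L_\phi^2\|r^M\|_{\mathbb{V}}^2$) and by dominated convergence: $(I-P_M)$ applied to a fixed $\mathbb{W}$- or $\mathbb{V}$-valued process tends to $0$ pointwise and is dominated, so its expectation tends to $0$.

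Then I would take expectations, kill the martingale term, and arrive at an inequality of Gronwall type,
$$
E\|r^M(t)\|_{\mathbb{V}}^2 \le E\|r^M(0)\|_{\mathbb{V}}^2 + \varepsilon_M(t) + C\int_0^t E\big(\|u(s)\|_{\mathbb{W}}\,\|r^M(s)\|_{\mathbb{V}}^2\big)\,ds,
$$
with $\varepsilon_M(t)\to 0$. Since $s\mapsto \|u(s)\|_{\mathbb{W}}$ is a.s. bounded on $[0,T]$ but only in $L^p(\Omega)$, I would use the stochastic Gronwall lemma (or a stopping-time truncation at $\{\sup_{s\le T}\|u(s)\|_{\mathbb{W}} > R\}$, whose probability is small uniformly in $M$ by Lemma \ref{Lem 01} and Chebyshev, then let $R\to\infty$), which yields $\sup_{t\le T} E\|r^M(t)\|_{\mathbb{V}}^2 \to 0$ and, after integrating the dissipation term, $E\int_0^T\|r^M(t)\|_{\mathbb{V}}^2\,dt \to 0$. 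The main obstacle is exactly this last point: the coefficient $\|u(s)\|_{\mathbb{W}}$ in the Gronwall term is not bounded but only has moments, so a naive Gronwall is unavailable; one must either invoke a stochastic Gronwall inequality or localize by stopping times and use the uniform-in-$M$ energy bounds from Lemma \ref{Lem 01} to control the error from the localization. All the other ingredients (the cancellation $b(v,v,v)=0$, the estimates (\ref{Ineq B 01})--(\ref{Ineq B 02}), the Lipschitz bound \textbf{(H0)}, and $P_M\to I$) are routine.
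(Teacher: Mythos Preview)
Your proposal is correct and matches the paper's approach: the paper does not give an independent proof of this lemma but simply states that it follows ``by similar arguments as in the proof of Theorem 4.2 in \cite{RS-12}'', which is exactly the argument you outline (It\^o on $\|r^M\|_\mathbb{V}^2$, the cancellation $(\delta B,r^M)=-(B(r^M,r^M),u)$ controlled via (\ref{Ineq B 02}), the Lipschitz bound \textbf{(H0)} on $\phi$, and a Gronwall step with the random weight $\|u\|_\mathbb{W}^2$ handled as in (\ref{eq:01})). Your identification of the random Gronwall coefficient as the only nontrivial point is accurate, and the weighted-expectation/localization remedy you describe is precisely what is used in \cite{RS-12}.
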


 By Lemma \ref{Lem 01}, we deduce that
$$
u^M(t)\rightarrow u(t, W, u_0)\ \text{weakly in}\ L^2(\Omega,\mathcal{F},P; \mathbb{W}).
$$
Furthermore, the following result holds.
\begin{prp}\label{Prop 01}
Assume {\bf (H0)} holds. There exists constant $C_1$ only depending on $\nu,\ \alpha,\ R$ and $\mathcal{O}$ such that
, for any $t\geq0$,
\begin{eqnarray}\label{eq 01 prop 01}
E(\|u(t,W,u_0)\|^2_\mathbb{W})\leq e^{-\frac{\nu t}{\mathcal{P}^2+\alpha}}\|u_0\|^2_\mathbb{W}+C_1.
\end{eqnarray}
Moreover, for any $\beta>0$, there exits a constant $C_\beta^1$ such that for any stopping time $\tau$,
\begin{eqnarray}\label{eq 02 prop 01}
E(e^{-\beta\tau}\|u(\tau,W,u_0)\|^2_\mathbb{W}I_{\tau<\infty})\leq \|u_0\|^2_\mathbb{W}+C^1_\beta.
\end{eqnarray}

\end{prp}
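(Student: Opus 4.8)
The plan is to deduce Proposition~\ref{Prop 01} from Lemma~\ref{Lem 01} by a limiting argument, using the weak convergence of the Galerkin approximations together with lower semicontinuity of norms. Concretely, Lemma~\ref{lem 03} gives $u^M(t)\to u(t,W,u_0)$ in $L^2(\Omega;\mathbb{V})$, and Lemma~\ref{Lem 01} gives the uniform bound $\sup_M E(\|u^M(t)\|^2_\mathbb{W})<\infty$ (since $\|u^M_0\|_\mathbb{W}\le\|u_0\|_\mathbb{W}$, which follows from $u^M_0=P_M u_0$ being the $\mathbb{W}$-orthogonal projection of $u_0$). These two facts together imply, for each fixed $t$, that $u^M(t)\rightharpoonup u(t,W,u_0)$ weakly in $L^2(\Omega,\mathcal{F},P;\mathbb{W})$ — this is exactly the statement already recorded in the excerpt just before the proposition.

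First I would establish \eqref{eq 01 prop 01}. Fix $t\ge0$ and any $O\in\mathcal{F}$; testing the weak convergence in $L^2(\Omega;\mathbb{W})$ against $1_O\,u(t,W,u_0)$ (or, more cleanly, using that the convex functional $v\mapsto E(\|v(t)\|^2_\mathbb{W})$ on $L^2(\Omega;\mathbb{W})$ is weakly lower semicontinuous) yields
\begin{eqnarray*}
E(\|u(t,W,u_0)\|^2_\mathbb{W})\leq\liminf_{M\to\infty}E(\|u^M(t)\|^2_\mathbb{W})\leq\liminf_{M\to\infty}\Big(e^{-\frac{\nu t}{\mathcal{P}^2+\alpha}}\|u^M_0\|^2_\mathbb{W}+C_1\Big)\leq e^{-\frac{\nu t}{\mathcal{P}^2+\alpha}}\|u_0\|^2_\mathbb{W}+C_1,
\end{eqnarray*}
where in the last step I use $\|u^M_0\|_\mathbb{W}=\|P_M u_0\|_\mathbb{W}\le\|u_0\|_\mathbb{W}$. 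This gives the first assertion with the same constant $C_1$ as in Lemma~\ref{Lem 01}.

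Next I would handle \eqref{eq 02 prop 01}, which is slightly more delicate because of the stopping time. For a bounded stopping time, or after the truncation $\tau\wedge n$, the map $v\mapsto E(e^{-\beta(\tau\wedge n)}\|v(\tau\wedge n)\|^2_\mathbb{W})$ is again a weakly lower semicontinuous convex functional on the relevant $L^2$ space, provided one first upgrades Lemma~\ref{lem 03} to convergence of $u^M$ to $u$ at the (bounded) random time $\tau\wedge n$ — this follows from the continuity in $\mathbb{V}$ of the solutions (Theorem~\ref{Solution Existence}) together with the $L^2$-convergence uniformly on compact time intervals, or one can instead work with the full path and note that $\sup_{M}E(\sup_{t\le n}\|u^M(t)\|^2_\mathbb{W})$ is controlled. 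Applying lower semicontinuity and then Lemma~\ref{Lem 01},
\begin{eqnarray*}
E\Big(e^{-\beta(\tau\wedge n)}\|u(\tau\wedge n,W,u_0)\|^2_\mathbb{W}\Big)\leq\liminf_{M\to\infty}E\Big(e^{-\beta(\tau\wedge n)}\|u^M(\tau\wedge n)\|^2_\mathbb{W}\Big)\leq\|u_0\|^2_\mathbb{W}+C^1_\beta,
\end{eqnarray*}
and letting $n\to\infty$ via Fatou's lemma (the integrand $e^{-\beta(\tau\wedge n)}\|u(\tau\wedge n)\|^2_\mathbb{W}1_{\tau<\infty}\to e^{-\beta\tau}\|u(\tau)\|^2_\mathbb{W}1_{\tau<\infty}$ a.s.\ using the $\mathbb{V}$-path continuity and weak $\mathbb{W}$-continuity of $u$) gives \eqref{eq 02 prop 01}.

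The main obstacle I anticipate is the justification at the random time $\tau$: weak convergence in $L^2(\Omega;\mathbb{W})$ is a statement at a fixed deterministic time, so one must either (i) argue that $u^M(\tau\wedge n)\rightharpoonup u(\tau\wedge n)$ in $L^2(\Omega;\mathbb{W})$ — which needs the pathwise $\mathbb{V}$-convergence plus the uniform $\mathbb{W}$-bound along the path — or (ii) avoid this by noting that for a bounded stopping time the inequality $d(e^{-\beta t}\|u\|^2_\mathbb{W})+\dots$ can be derived for the limit $u$ directly once one knows the Itô formula \eqref{eq 006} passes to the limit; the cleanest route is (i), using that $\{\|u^M(\cdot)\|^2_\mathbb{W}\}$ is bounded in $L^2(\Omega;L^\infty([0,T]))$ uniformly in $M$ so that along a subsequence $u^M(\tau\wedge n)$ converges weakly in $L^2(\Omega;\mathbb{W})$, while Lemma~\ref{lem 03} and $\mathbb{V}$-continuity pin down the limit as $u(\tau\wedge n)$. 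Everything else is a routine combination of lower semicontinuity, the bound $\|P_M u_0\|_\mathbb{W}\le\|u_0\|_\mathbb{W}$, and Fatou's lemma.
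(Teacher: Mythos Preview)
Your proposal is correct and follows essentially the same approach as the paper: the paper simply notes that Lemma~\ref{Lem 01} together with Lemma~\ref{lem 03} yields $u^M(t)\to u(t,W,u_0)$ weakly in $L^2(\Omega,\mathcal{F},P;\mathbb{W})$, and states Proposition~\ref{Prop 01} as an immediate consequence via lower semicontinuity. Your write-up is in fact more careful than the paper's, which gives no details at all for the stopping-time estimate~\eqref{eq 02 prop 01}; your truncation to $\tau\wedge n$ followed by Fatou's lemma, and your discussion of how to justify weak convergence at the random time, fill in exactly the gap the paper leaves implicit.
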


\vskip 0.3cm

Recall $l=\frac{\nu}{\mathcal{P}^2+\alpha}$ and $l_0=(1+\frac{2}{\lambda_1}+\frac{2(\mathcal{P}^2+\alpha)}{\lambda_1\alpha^2})K^2$. Define
the energy functional
$$
E_f(t)=\|f(t)\|^2_\mathbb{W}+\frac{l}{2}\int^t_0\|f(s)\|^2_\mathbb{W}ds.
$$
\begin{lem}\label{Lem G 02}
Assume (H0) holds. There exists $\kappa>0$ such that for any $\kappa_0\leq\kappa/2$
\begin{eqnarray}\label{001}
E\Big[\exp\Big(\kappa_0\sup_{t\geq0}(E_{u^M(\cdot,W,u_0^M)}(t)-l_0Rt)\Big)\Big]
\leq
2\exp(\kappa_0\|u_0^M(0)\|^2_\mathbb{W}).
\end{eqnarray}
\end{lem}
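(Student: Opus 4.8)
The plan is to derive an Itô-type differential inequality for $\|u^M\|^2_{\mathbb{W}}$ whose drift is $\le -\frac{l}{2}\|u^M\|^2_{\mathbb{W}}dt + l_0R\,dt$ plus a martingale, and then to apply the exponential supermartingale (Doob-type) estimate: if $M(t)$ is a continuous local martingale and $Z(t)=\exp(\kappa_0 M(t)-\tfrac{\kappa_0^2}{2}\langle M\rangle_t)$ is a supermartingale, then $\mathbb{P}(\sup_t(M(t)-\tfrac{\kappa_0}{2}\langle M\rangle_t)\ge \lambda)\le e^{-\kappa_0\lambda}$, hence $E[\exp(\kappa_0\sup_t(M(t)-\tfrac{\kappa_0}{2}\langle M\rangle_t))]\le 2$. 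So the key is to arrange things so that the quadratic variation of the martingale part is dominated (up to the factor $\kappa_0$) by the dissipation term $\frac{l}{2}\int_0^t\|u^M\|^2_{\mathbb{W}}ds$ that we already have in $E_{u^M}(t)$.

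First I would recall from the proof of Lemma~\ref{Lem 01}, specifically inequality (\ref{eq 006}), that
\[
d\|u^M\|^2_\mathbb{W}+l\|u^M\|^2_\mathbb{W}\,dt
\le l_0R\,dt + dN(t),
\]
where $N(t)=\int_0^t\big[(2+\tfrac{4(\mathcal{P}^2+\alpha)}{\alpha^2})(\phi(u^M),u^M) + 2(curl(u^M-\alpha\triangle u^M),curl(\phi(u^M)))\big]dW$. Integrating, $E_{u^M}(t)-l_0Rt \le \|u_0^M\|^2_\mathbb{W} + \tfrac{l}{2}\int_0^t\|u^M\|^2_\mathbb{W}ds - l\int_0^t\|u^M\|^2_\mathbb{W}ds + N(t) = \|u_0^M\|^2_\mathbb{W} - \tfrac{l}{2}\int_0^t\|u^M\|^2_\mathbb{W}ds + N(t)$. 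Now I would estimate the quadratic variation: using {\bf (H0)} ($\|\phi(v)\|^2_{\mathcal{L}_2(U,\mathbb{V})}\le R$), the bound $|curl(u-\alpha\triangle u)|=\|u\|_*\le \|u\|_\mathbb{W}$, the equivalence (\ref{Eq 01}) and Lemma~2.1 (the $\mathbb{H}^3$–$\mathbb{W}$ norm equivalence), one gets $d\langle N\rangle_t \le C_*\,R\,\|u^M(t)\|^2_\mathbb{W}\,dt$ for a constant $C_*$ depending only on $\alpha,\mathcal{O},\mathcal{P}$ (the two stochastic integrands are controlled by $C\|u^M\|_\mathbb{V}\|\phi(u^M)\|_{\mathcal{L}_2(U,\mathbb{V})}$ and $C\|u^M\|_*\|\phi(u^M)\|_{\mathcal{L}_2(U,\mathbb{V})}$ respectively).

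Then I would set $\kappa := l/(C_*R)$ and take any $\kappa_0\le\kappa/2$. With $M(t):=\kappa_0 N(t)$ we have $\langle M\rangle_t = \kappa_0^2\langle N\rangle_t \le \kappa_0^2 C_*R\int_0^t\|u^M\|^2_\mathbb{W}ds$, so $\tfrac{1}{2\kappa_0}\langle M\rangle_t \le \tfrac{\kappa_0 C_* R}{2}\int_0^t\|u^M\|^2_\mathbb{W}ds \le \tfrac{\kappa C_*R}{4}\int_0^t\|u^M\|^2_\mathbb{W}ds = \tfrac{l}{4}\int_0^t\|u^M\|^2_\mathbb{W}ds \le \tfrac{l}{2}\int_0^t\|u^M\|^2_\mathbb{W}ds$. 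Combining with the integrated inequality above,
\[
\kappa_0\big(E_{u^M}(t)-l_0Rt\big)\;\le\;\kappa_0\|u_0^M\|^2_\mathbb{W} + M(t) - \tfrac{1}{2}\langle M\rangle_t,\qquad t\ge0.
\]
Since $\exp(M(t)-\tfrac12\langle M\rangle_t)$ is a nonnegative supermartingale with value $1$ at $t=0$, the maximal inequality $\mathbb{P}(\sup_{t\ge0}(M(t)-\tfrac12\langle M\rangle_t)\ge \lambda)\le e^{-\lambda}$ gives $E[\exp(\sup_{t\ge0}(M(t)-\tfrac12\langle M\rangle_t))]=\int_0^\infty \mathbb{P}(\sup(\cdots)\ge\log s)\,ds \le \int_0^1 1\,ds+\int_1^\infty s^{-1}... $ — more cleanly, one uses $E[\exp(\sup_t (M_t-\tfrac12\langle M\rangle_t))]\le 2$ via the layer-cake formula together with the tail bound $e^{-\lambda}$. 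Multiplying through by $e^{\kappa_0\|u_0^M\|^2_\mathbb{W}}$ yields (\ref{001}). The main obstacle is the second step: getting the clean quadratic-variation bound $d\langle N\rangle_t\le C_*R\|u^M\|^2_\mathbb{W}dt$ with a constant independent of $M$, which requires carefully tracking the projector estimate $\|(\phi(u),e_i)\|^2_{\mathcal{L}_2(U,\mathbb{R})}\le C\|\phi(u)\|^2_{\mathcal{L}_2(U,\mathbb{V})}$ already established before Lemma~\ref{Lem 01}, the bound (\ref{Eq 9}), and the definition (\ref{W}) of the $\mathbb{W}$-norm, so that the factor multiplying $\|u^M\|^2_\mathbb{W}$ depends only on $\alpha,\mathcal{P},R$ and not on the Galerkin level; and then ensuring $\kappa_0\le\kappa/2$ is small enough that the stochastic term's contribution is absorbed by at most half of the available dissipation.
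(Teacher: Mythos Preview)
Your overall strategy---start from \eqref{eq 006}, bound the quadratic variation of the martingale part by $C_*R\|u^M\|^2_\mathbb{W}\,dt$, and use an exponential supermartingale inequality---is exactly the paper's. But the final step contains a genuine error.

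You assert that the tail bound $\mathbb{P}\big(\sup_t(M_t-\tfrac12\langle M\rangle_t)\ge\lambda\big)\le e^{-\lambda}$ implies $E\big[\exp(\sup_t(M_t-\tfrac12\langle M\rangle_t))\big]\le 2$. This is false: your own layer-cake computation produces $\int_1^\infty s^{-1}\,ds=\infty$, and the conclusion is not salvageable at the exponent~$1$. For a concrete counterexample take $M_t=B_t$; then $\sup_{t\ge0}(B_t-t/2)$ is exponential with rate~$1$, and $E[e^X]=\infty$ for $X\sim\mathrm{Exp}(1)$.

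The fix is to exploit the slack you already created but did not use. You showed $\tfrac{1}{2\kappa_0}\langle M\rangle_t\le \tfrac{l}{4}\int_0^t\|u^M\|^2_\mathbb{W}\,ds$, which is twice as good as what you then invoke. Use this to write
\[
\kappa_0\big(E_{u^M}(t)-l_0Rt\big)\;\le\;\kappa_0\|u_0^M\|^2_\mathbb{W}+M(t)-\langle M\rangle_t,
\]
and observe that $\exp\big(2M-2\langle M\rangle\big)=\exp\big(2M-\tfrac12\langle 2M\rangle\big)$ is a nonnegative supermartingale, giving the tail $\mathbb{P}(\sup_t(M_t-\langle M\rangle_t)\ge\lambda)\le e^{-2\lambda}$. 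Now layer-cake yields $1+\int_0^\infty e^{y}e^{-2y}\,dy=2$. The paper does the same thing in slightly different packaging: it sets $\mathcal{A}_\kappa=\mathcal{A}-\tfrac{\kappa}{2}\langle\mathcal{A}\rangle$ with $\kappa=l/(cR)$, obtains the tail $e^{-\kappa\rho}$, and only then integrates against $e^{\kappa_0\rho}$ with $\kappa_0\le\kappa/2$, so that $\int_0^\infty e^{(\kappa_0-\kappa)\rho}\,d\rho<\infty$. The point in both versions is that the decay rate in the tail must \emph{strictly} exceed the exponent in the expectation; your formulation put them equal.
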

\begin{proof}
Set
$$
\mathcal{A}(t)=\int_0^t(2+\frac{4(\mathcal{P}^2+\alpha)}{\alpha^2})(\phi(u^M),u^M)dW(s)
+
2 \int_0^t\Big(curl(u^M-\alpha \triangle u^M), curl(\phi(u^M))\Big)dW(s).
$$
It is easy to see that
\begin{eqnarray*}
d\langle \mathcal{A}\rangle(t)
\leq
cR\|u^M\|^2_\mathbb{W}dt.
\end{eqnarray*}

Let $\kappa=\frac{l}{cR}$ and
\begin{eqnarray*}
\mathcal{A}_\kappa(t)=\mathcal{A}(t)-\frac{\kappa}{2}\langle \mathcal{A}\rangle(t).
\end{eqnarray*}
By (\ref{eq 006}), we have
\begin{eqnarray}\label{008}
E_{u^M}(t)
\leq
\|u^M(0)\|^2_\mathbb{W}+l_0Rt+\mathcal{A}_\kappa(t).
\end{eqnarray}

Since $e^{\kappa\mathcal{A}_\kappa}$ is a positive supermartingale whose value is 1 at $t=0$, we have
\begin{eqnarray}
P\Big(\sup_{t\geq0}\mathcal{A}_\kappa(t)\geq\rho\Big)
\leq
P\Big(\sup_{t\geq0}\exp(\kappa\mathcal{A}_\kappa(t))\geq \exp(\kappa\rho)\Big)
\leq
\exp(-\kappa\rho),
\end{eqnarray}
which implies, letting $\kappa_0\leq\kappa/2$,
\begin{eqnarray}\label{009}
E\big(e^{\kappa_0\sup\mathcal{A}_\kappa}\Big)
=
1+\kappa_0\int_0^\infty e^{\kappa_0y}P\Big(\sup_{t\geq0}\mathcal{A}_\kappa(t)\geq y\Big)dy
\leq
2.
\end{eqnarray}

Combining (\ref{009}) and (\ref{008}), we deduce (\ref{001}).

\end{proof}

%Consider the following SPDE
%\begin{eqnarray}\label{Eq app}
%d(\tilde{u}-\alpha\tilde{u})+\nu A\tilde{u}dt+curl(\tilde{u}-\alpha\tilde{u})\times \tilde{u}dt
%+
%\rho P_N(\tilde{u}-u(t,W,u_0))dt
%=
%\phi(\tilde{u})dW
%\end{eqnarray}
%with initial condition $\tilde{u}(0)=\tilde{u}_0\in\mathbb{W}$.
Setting $\rho>0$. Let $\tilde{u}^M\in\mathbb{W}_M$ be the solution of the following SPDE
\begin{eqnarray}\label{eq app 01}
& &d(\tilde{u}^M,e_i)_\mathbb{V}+\nu((\tilde{u}^M,e_i))dt+b(\tilde{u}^M,\tilde{u}^M,e_i)dt-\alpha b(\tilde{u}^M,\triangle \tilde{u}^M,e_i)dt+\alpha b(e_i,\triangle \tilde{u}^M,\tilde{u}^M)dt\nonumber\\
&=& (\rho P_N(\tilde{u}^M-u^M(t,W,u_0)),e_i)_{\mathbb{V}}dt
 +(\phi(\tilde{u}^M),e_i)dW(t)
\end{eqnarray}
for any $i\in\{1,2,\cdots,M\}$, with initial value $\tilde{u}^M(0)=P_M\tilde{u}_0$, $\tilde{u}_0\in W$.

\begin{lem}\label{lem 02}

Assume that {\bf (H0)} and {\bf (H1)} hold. There exist $\varpi>0$ and $\kappa_0\geq 0$ such that
\begin{eqnarray}\label{eq: 03}
& &\sup_{t\geq 0}E\Big(\Big(e^t \|r_M(t)\|^2_\mathbb{V}+\int_0^te^s \|r_M(s)\|^2_\mathbb{V}ds\Big)^\varpi\Big)\nonumber\\
&\leq&
2\|r_M(0)\|^{2\varpi}_\mathbb{V}\exp(\kappa_0\|u^M(0)\|^2_\mathbb{W})
\end{eqnarray}
where $r_M=\tilde{u}^M-u^M$.
\end{lem}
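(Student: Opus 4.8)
The plan is to derive a closed It\^o inequality for $\|r_M(t)\|_\mathbb{V}^2$ in which the damping rate is random (driven by $\|u^M\|_\mathbb{W}$), and then to pass to the $\varpi$-th moment by a stochastic Gronwall argument fed by the exponential estimate of Lemma~\ref{Lem G 02}. First I would subtract the Galerkin system (\ref{Eq Galerkin}) from (\ref{eq app 01}). Writing $B(u,v)=curl(u-\alpha\triangle u)\times v$ and cancelling the $b$-terms exactly as in the derivation of (\ref{Eq G ito}), and using $P_N(\tilde u^M-u^M)=P_Nr_M$, the difference $r_M=\tilde u^M-u^M$ satisfies, for $i\le M$,
\[
d(r_M,e_i)_\mathbb{V}+\nu((r_M,e_i))\,dt+\big(B(\tilde u^M,\tilde u^M)-B(u^M,u^M),e_i\big)dt=\rho(P_Nr_M,e_i)_\mathbb{V}\,dt+\big(\phi(\tilde u^M)-\phi(u^M),e_i\big)dW(t).
\]
Multiplying by $\lambda_i(r_M,e_i)_\mathbb{V}$, summing over $i$ and applying It\^o's formula as in the derivation of (\ref{Eq G ito})---using in addition $\sum_i\lambda_i(r_M,e_i)_\mathbb{V}(P_Nr_M,e_i)_\mathbb{V}=\|P_Nr_M\|_\mathbb{V}^2$ and the bound $\sum_i\lambda_i|(\phi(\tilde u^M)-\phi(u^M),e_i)|^2_{\mathcal{L}_2(U,\mathbb{R})}\le\frac{K^2L_\phi^2}{\lambda_1}\|r_M\|_\mathbb{V}^2$, obtained as in (\ref{Eq ito 04}) but invoking the Lipschitz part of \textbf{(H0)}---I get
\[
d\|r_M\|_\mathbb{V}^2+2\nu\|r_M\|^2\,dt+2\big(B(\tilde u^M,r_M)+B(r_M,u^M),r_M\big)dt=2\rho\|P_Nr_M\|_\mathbb{V}^2\,dt+dM_t+\frac{K^2L_\phi^2}{\lambda_1}\|r_M\|_\mathbb{V}^2\,dt,
\]
where $dM_t=2(\phi(\tilde u^M)-\phi(u^M),r_M)\,dW(t)$ has $d\langle M\rangle_t\le 4L_\phi^2\|r_M\|_\mathbb{V}^4\,dt$.

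To close the estimate I would use that $(B(v,w),w)=0$, which kills the $B(\tilde u^M,r_M)$ contribution, and the algebraic identity $(B(r_M,u^M),r_M)=-(B(r_M,r_M),u^M)$ together with (\ref{Ineq B 02}) of Lemma~\ref{Lem B}, giving $|(B(r_M,u^M),r_M)|\le\Theta\|r_M\|_\mathbb{V}^2\|u^M\|_\mathbb{W}$; a Young step $2\Theta\|u^M\|_\mathbb{W}\|r_M\|_\mathbb{V}^2\le\eta\|r_M\|_\mathbb{V}^2+\tfrac{\Theta^2}{\eta}\|u^M\|_\mathbb{W}^2\|r_M\|_\mathbb{V}^2$ then yields a term in $\|u^M\|_\mathbb{W}^2$. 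Using $2\nu\|r_M\|^2\ge 2l\|r_M\|_\mathbb{V}^2$ from (\ref{Eq 01}), with $l=\frac{\nu}{\mathcal{P}^2+\alpha}$, and bounding $2\rho\|P_Nr_M\|_\mathbb{V}^2\le2\rho\|r_M\|_\mathbb{V}^2$, I arrive at
\[
d\|r_M\|_\mathbb{V}^2+\mu_1\|r_M\|_\mathbb{V}^2\,dt\le\tfrac{\Theta^2}{\eta}\|u^M\|_\mathbb{W}^2\,\|r_M\|_\mathbb{V}^2\,dt+dM_t,\qquad \mu_1:=2l-\tfrac{K^2L_\phi^2}{\lambda_1}-\eta-2\rho .
\]
By hypothesis \textbf{(H1)}, namely (\ref{eq c nu}), one has $2l-1-\tfrac{K^2L_\phi^2}{\lambda_1}>0$, hence $\mu_1>1$ once $\eta$ and $\rho$ are chosen small.

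For the moments, set $Q(t)=\exp\!\big(\mu_1 t-\tfrac{\Theta^2}{\eta}\int_0^t\|u^M(s)\|_\mathbb{W}^2\,ds\big)$; the exponent is tailored so that the bad term cancels, $d\big(Q(t)\|r_M(t)\|_\mathbb{V}^2\big)\le Q(t)\,dM_t$, whence $Q(t)\|r_M(t)\|_\mathbb{V}^2$ is a nonnegative local supermartingale and $E\big[Q(t)\|r_M(t)\|_\mathbb{V}^2\big]\le\|r_M(0)\|_\mathbb{V}^2$ (localisation and Fatou, legitimate since everything lives in the finite-dimensional $\mathbb{W}_M$). Since $e^t\|r_M(t)\|_\mathbb{V}^2=\big(Q(t)\|r_M(t)\|_\mathbb{V}^2\big)\exp\!\big((1-\mu_1)t+\tfrac{\Theta^2}{\eta}\int_0^t\|u^M\|_\mathbb{W}^2\,ds\big)$, I would raise to a small power $\varpi\in(0,1)$ and apply H\"older so that the supermartingale factor enters with exponent $1$:
\[
E\big[(e^t\|r_M(t)\|_\mathbb{V}^2)^\varpi\big]\le\|r_M(0)\|_\mathbb{V}^{2\varpi}\,E\Big[\exp\Big(\tfrac{\varpi}{1-\varpi}(1-\mu_1)t+\tfrac{\varpi\Theta^2}{(1-\varpi)\eta}\int_0^t\|u^M\|_\mathbb{W}^2\,ds\Big)\Big]^{1-\varpi}.
\]
Because $\tfrac l2\int_0^t\|u^M\|_\mathbb{W}^2\,ds\le E_{u^M}(t)\le l_0Rt+\sup_{s\ge0}\big(E_{u^M}(s)-l_0Rs\big)$, Lemma~\ref{Lem G 02} applied with $\kappa_0:=\tfrac{2\varpi\Theta^2}{(1-\varpi)\eta l}$ (which is $\le\kappa/2$ once $\varpi$ is small) gives $E\big[\exp\big(\tfrac{\varpi\Theta^2}{(1-\varpi)\eta}\int_0^t\|u^M\|_\mathbb{W}^2\,ds\big)\big]\le 2\,e^{\kappa_0l_0Rt}e^{\kappa_0\|u^M(0)\|_\mathbb{W}^2}$. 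Substituting, the coefficient of $t$ in the resulting exponential equals $\varpi\big[(1-\mu_1)+\tfrac{2\Theta^2l_0R}{\eta l}\big]$, which can be made $\le0$ by optimising $\eta$ (equivalently the margin in $\mu_1$) against (\ref{eq c nu}); what is left is $E[(e^t\|r_M(t)\|_\mathbb{V}^2)^\varpi]\le 2\,\|r_M(0)\|_\mathbb{V}^{2\varpi}e^{\kappa_0\|u^M(0)\|_\mathbb{W}^2}$, uniformly in $t$. For the $\int_0^te^s\|r_M(s)\|_\mathbb{V}^2\,ds$ term the same representation carries a negative drift $e^{(1-\mu_1)s}$ in front, so the integral converges; bringing $\varpi$ under the $ds$-integral via $(a+b)^\varpi\le a^\varpi+b^\varpi$ and Jensen, one bounds its $\varpi$-moment the same way, and adding the two pieces gives (\ref{eq: 03}), with the constant on the right inherited from Lemma~\ref{Lem G 02}.

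The first two paragraphs are routine; the genuine difficulty is the last step, where the three parameters---$\varpi$ (small), the Young weight $\eta$, and $\kappa_0$---must be tuned so that \emph{simultaneously} the $t$-coefficient $\varpi\big[(1-\mu_1)+\tfrac{2\Theta^2l_0R}{\eta l}\big]$ is non-positive and $\kappa_0\le\kappa/2$. This balance is exactly what (\ref{eq c nu}) provides, combined with the exponential control of $\int_0^t\|u^M\|_\mathbb{W}^2\,ds$ from Lemma~\ref{Lem G 02}, and it is the only place where the explicit form of \textbf{(H1)} is used.
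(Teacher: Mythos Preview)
Your approach is essentially the paper's: derive a random-damping differential inequality for $\|r_M\|_\mathbb{V}^2$ via the identity $(\delta B,r_M)=-(B(r_M,r_M),u^M)$ and the trilinear estimate \eqref{Ineq B 02}, introduce the integrating factor $Q=G_1^{-1}$ to obtain a supermartingale bound, then split by H\"older and feed Lemma~\ref{Lem G 02} to control the exponential of $\int_0^t\|u^M\|_\mathbb{W}^2\,ds$, with $\varpi$ chosen small so that the resulting $t$-coefficient is nonpositive under \eqref{eq c nu}. Two small points of alignment: (i) in the paper the nudging term sits as $+2\rho\|P_Nr_M\|_\mathbb{V}^2$ on the \emph{left} (damping, consistent with \eqref{eq u tilde}) and is simply discarded, so $\rho$ never enters the constants---your $-2\rho$ in $\mu_1$ comes from reading the sign typo in \eqref{eq app 01} literally; (ii) rather than treating $e^t\|r_M\|^2$ and $\int_0^t e^s\|r_M\|^2\,ds$ separately, the paper absorbs the $e^s$ factor into $G_1^{-1}$ from the outset and bounds $E\big[e^tG_1^{-1}\|r_M\|^2+\int_0^t e^sG_1^{-1}\|r_M\|^2\,ds\big]\le\|r_M(0)\|_\mathbb{V}^2$ in one stroke, then pulls out $(\sup_s G_1(s))^{2\varpi}$ by Cauchy--Schwarz and Jensen, which is cleaner than your Jensen-under-the-integral step.
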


\begin{proof}
Note that
\begin{eqnarray*}
d(r_M,e_i)_{\mathbb{V}}+\nu((r_M,e_i))dt+(\delta B,e_i)dt+\rho (P_N r_M,e_i)_{\mathbb{V}}dt
=
(\delta \phi,e_i)dW(s),
\end{eqnarray*}
here $\delta B=B(\tilde{u}^M,\tilde{u}^M)-B(u^M,u^M)$ and $\delta \phi=\phi(\tilde{u}^M)-\phi(u^M)$.

Applying ${\rm It\hat{o}}$'s formula to $(r_M,e_i)^2_{\mathbb{V}}$, and remenbering that $\|r_M\|^2_\mathbb{V}=\sum_{i=1}^M \lambda_i(r_M,e_i)^2_{\mathbb{V}}$,
we have
\begin{eqnarray*}
&&d\|r_M\|^2_\mathbb{V}+2\nu\|r_M\|^2dt+2(\delta B,r_M)dt+2(\rho P_N r_M,r_M)_{\mathbb{V}}dt\\
&=&
2(\delta \phi,r_M)dW(t)+\sum_{i=1}^M\lambda_i|(\delta \phi,e_i)|^2_{\mathcal{L}_2(U,\mathbb{R})}dt.
\end{eqnarray*}

By (\ref{Ineq B 02}),
$$
|2(\delta B,r_M)|=|2(B(r_M,r_M,)u^M)|
\leq
2\Theta\|r_M\|^2_\mathbb{V}\|u^M\|_\mathbb{W}
\leq
\|r_M\|^2_\mathbb{V}+\Theta^2\|r_M\|^2_\mathbb{V}\|u^M\|^2_\mathbb{W}.
$$
By the similar arguments as that for the proof of (\ref{Eq ito 04}),
$$
\sum_{i=1}^M\lambda_i|(\delta \phi,e_i)|^2_{\mathcal{L}_2(U,\mathbb{R})}
\leq
\frac{K^2}{\lambda_1}|\delta \phi|^2_{\mathcal{L}_2(U,\mathbb{V})}
\leq
\frac{K^2L^2_\phi}{\lambda_1}\|r_M\|^2_\mathbb{V}.
$$
Set $l_1=\frac{2\nu}{\mathcal{P}^2+\alpha}-1-\frac{K^2L^2_\phi}{\lambda_1}>0$ and setting $\Lambda_1=\Theta^2$, we have
\begin{eqnarray*}
d\|r_M\|^2_\mathbb{V}+\Big(l_1-\Lambda_1\|u^M\|^2_\mathbb{W}\Big)\|r_M\|^2_\mathbb{V}dt
\leq
2(\delta \phi,r_M)dW(t).
\end{eqnarray*}
Set $G_1(t)=e^{-l_1 t+\Lambda_1 \int_0^t\|u^M(s)\|^2_\mathbb{W}ds}$. By the chain rule, we have
\begin{eqnarray*}
d(e^t G^{-1}_1(t)\|r_M\|^2_\mathbb{V})+e^t G^{-1}_1(t)\|r_M\|^2_\mathbb{V}dt
\leq
2e^t G^{-1}_1(t)(\delta \phi,r_M)dW(t).
\end{eqnarray*}
Integrating the above inequality and taking expectation, it follows that
\begin{eqnarray}\label{1}
E\Big(e^t G^{-1}_1(t)\|r_M(t)\|^2_\mathbb{V}+\int_0^te^s G^{-1}_1(s)\|r_M(s)\|^2_\mathbb{V}ds\Big)
\leq
\|r_M(0)\|^2_\mathbb{V}.
\end{eqnarray}

By $\rm H\ddot{o}lder$ inequality,
\begin{eqnarray}\label{2}
& &E\Big(\Big(e^t \|r_M(t)\|^2_\mathbb{V}+\int_0^te^s \|r_M(s)\|^2_\mathbb{V}ds\Big)^\varpi\Big)\nonumber\\
&\leq&
\sqrt{E(\sup_{t\geq0}G^{2\varpi}_1(t))}
\Big(E\Big(e^t G^{-1}_1(t)\|r_M(t)\|^2_\mathbb{V}+\int_0^te^s G^{-1}_1(s)\|r_M(s)\|^2_\mathbb{V}ds\Big)\Big)^\varpi.
\end{eqnarray}
Choosing $\varpi>0$ sufficiently small, it follows from Lemma \ref{Lem G 02} and condition \textbf{(H1)} that
\begin{eqnarray}\label{3}
E(\sup_{t\geq0}G^{2\varpi}_1(t))
&=&
E(\sup_{t\geq0}e^{-2\varpi l_1 t+2\Lambda_1\varpi \int_0^t\|u^M(s)\|^2_\mathbb{W}ds})\nonumber\\
&\leq&
E(\sup_{t\geq0}e^{\frac{4\varpi \Lambda_1}{l}(E_{u^M}(t)-\frac{l_1l}{2\Lambda_1}t)})\nonumber\\
&\leq&
E\Big(\exp\Big(\sup_{t\geq0}\frac{4\varpi \Lambda_1}{l}(E_{u^M}(t)-\frac{l_1l}{2\Lambda_1}t)\Big)\Big).
\end{eqnarray}

(\ref{1}) (\ref{2}) and (\ref{3}) imply (\ref{eq: 03}).
The proof  is complete.
\end{proof}
\vskip 0.3cm

Recall $X^x$ defined by (\ref{X}). By Theorem 3.4 and Theorem 4.1 in \cite{RS-12}, there exits a unique solution $\tilde{u}=\tilde{u}(\cdot,W,u_0,\tilde{u}_0)$ satisfying
\begin{eqnarray}\label{eq u tilde}
&  &d(\tilde{u} -\alpha \triangle \tilde{u} )+\Big(-\nu \triangle \tilde{u} +curl(\tilde{u} -\alpha \triangle \tilde{u} )\times \tilde{u} \Big)dt
+\rho P_N((\tilde{u}-X^{u_0}) -\alpha \triangle (\tilde{u}-X^{u_0}))dt\nonumber\\
& &=
\phi(\tilde{u})dW,
\end{eqnarray}
with initial value $\tilde{u}(0)=\tilde{u}_0\in\mathbb{W}$.

By Lemma \ref{lem 03} and using similar arguments as Theorem 4.2 in \cite{RS-12}, we have
\begin{lem}\label{lem 04}
The sequence of approximations $(\tilde{u}^M)_{M\geq1}$ satisfies
$$
\lim_{M\rightarrow\infty}E(\|\tilde{u}^M(t)-\tilde{u}(t)\|^2_\mathbb{V})=0,
$$
$$
\lim_{M\rightarrow\infty}E(\int_0^T\|\tilde{u}^M(t)-\tilde{u}(t)\|^2_\mathbb{V}dt)=0.
$$
\end{lem}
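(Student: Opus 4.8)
I would prove this lemma in close parallel with Lemma \ref{lem 03}. The auxiliary Galerkin system (\ref{eq app 01}) differs from the Galerkin scheme (\ref{Eq Galerkin}) of (\ref{01}) only by the extra term $\rho(P_N(\tilde u^M-u^M(t,W,u_0)),e_i)_\mathbb{V}$, and the limit equation (\ref{eq u tilde}) differs from (\ref{01}) only by the analogous term $\rho P_N((\tilde u-X^{u_0})-\alpha\triangle(\tilde u-X^{u_0}))$, which in its weak $(\cdot,\cdot)_\mathbb{V}$-form reads $\rho(P_N(\tilde u-u(\cdot,W,u_0)),\cdot)_\mathbb{V}$. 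Both extra terms are linear, of lower order, and involve only the fixed finite-rank projection $P_N$, whose range lies in $\mathrm{Span}(e_1,\dots,e_N)\subset\mathbb{H}^4(\mathcal{O})$ by Lemma \ref{lem Basis}; in particular $\|P_N v\|_\mathbb{W}\le c_N\|v\|_\mathbb{V}$ for every $v\in\mathbb{V}$. The plan is therefore to rerun, term by term, the Galerkin-convergence argument giving Lemma \ref{lem 03} (i.e.\ the argument of Theorem~4.2 in \cite{RS-12}) and to check that these two additional terms create no new difficulty.

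\emph{Step 1 (uniform a priori bounds).} I would first show $\sup_{M\ge1}E\big(\sup_{0\le t\le T}\|\tilde u^M(t)\|_\mathbb{W}^2\big)<\infty$ for every $T>0$ by repeating on (\ref{eq app 01}) the It\^o computation of Lemma \ref{Lem 01}, i.e.\ both the $\|\cdot\|_\mathbb{V}$-estimate and the $\|\cdot\|_*$-estimate obtained there through the generalized Stokes operator. Testing the feedback term against $\tilde u^M$ produces $-\rho\|P_N\tilde u^M\|_\mathbb{V}^2+\rho(P_Nu^M,\tilde u^M)_\mathbb{V}\le\tfrac\rho2\|\tilde u^M\|_\mathbb{V}^2+\tfrac\rho2\|u^M\|_\mathbb{V}^2$, and in the $\|\cdot\|_*$-estimate an analogous term bounded by $c_N^2(\|\tilde u^M\|_\mathbb{W}^2+\|u^M\|_\mathbb{W}^2)$; one controls the $u^M$-contributions by (\ref{eq 01 lem 01}) and absorbs the $\tilde u^M$-contributions into Gronwall's inequality.

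\emph{Step 2 (passage to the limit).} With these bounds, a subsequence $\tilde u^{M_k}$ converges to some $\bar u$ weakly in $L^2(\Omega;L^2(0,T;\mathbb{W}))$ and weak-$*$ in $L^2(\Omega;L^\infty(0,T;\mathbb{W}))$ and, by the stochastic-compactness argument of \cite{RS-12} together with the uniform $\mathbb{H}^3$-bounds of Step~1, strongly enough in $L^2(\Omega\times(0,T);\mathbb{V})$ to pass to the limit in $B(\tilde u^M,\tilde u^M)$ via Lemma \ref{Lem B}. The stochastic integral passes by \textbf{(H0)}, and the feedback term passes because $P_N$ is a fixed finite-rank operator continuous on $\mathbb{V}$ and $u^M\to u$ by Lemma \ref{lem 03}; hence $\bar u$ solves (\ref{eq u tilde}), and by the uniqueness of that solution (recalled above from \cite{RS-12}) we get $\bar u=\tilde u$, so the whole sequence converges. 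To upgrade this to the two quantitative limits in the statement I would run the energy estimate behind Theorem \ref{Solution Existence 01} directly on $w^M:=\tilde u^M-P_M\tilde u$ (disposing of $\|P_M\tilde u-\tilde u\|_\mathbb{V}\to0$, which holds $P$-a.s.\ and is dominated via the $\mathbb{W}$-bound, separately): apply It\^o to $\|w^M\|_\mathbb{V}^2$, bound the nonlinear difference via (\ref{Ineq B 02}) by $\Theta\|w^M\|_\mathbb{V}^2\|\tilde u^M\|_\mathbb{W}$ up to terms tending to $0$, control $\phi(\tilde u^M)-\phi(P_M\tilde u)$ by \textbf{(H0)}, note that the feedback term contributes $-2\rho(P_Nw^M,w^M)_\mathbb{V}$ (favorable sign) plus $2\rho(P_N(u-u^M),w^M)_\mathbb{V}\to0$, and absorb the random factor $\|\tilde u^M\|_\mathbb{W}$ by the exponential time-change $\sigma(t)=\exp\big(-\int_0^t\|\tilde u^M(s)\|_\mathbb{W}^2\,ds\big)$ exactly as in (\ref{eq:01}); Gronwall together with Step~1 then yields both displayed limits.

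The main obstacle is, exactly as in \cite{RS-12}, the passage to the limit in the nonlinearity $curl(\tilde u^M-\alpha\triangle\tilde u^M)\times\tilde u^M$, which is cubic in the derivatives and relies on the uniform $\mathbb{H}^3$-bounds of Step~1 together with the compactness argument; the newly added feedback terms, being linear, lower-order and driven by the already-convergent sequence $u^M$, are harmless. A secondary, purely technical point is the mismatch $\tilde u^M\in\mathbb{W}_M$ versus $\tilde u\in\mathbb{W}$, handled by the splitting through $P_M\tilde u$ above.
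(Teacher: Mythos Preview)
Your approach is correct and is precisely what the paper does: its entire proof of this lemma is the single sentence ``By Lemma \ref{lem 03} and using similar arguments as Theorem 4.2 in \cite{RS-12}'', i.e., rerun the Galerkin convergence proof for (\ref{01}) with the additional linear feedback term, using Lemma \ref{lem 03} to control the $u^M$-part of that term. Your sketch simply supplies the details this sentence suppresses, and your observations that the feedback is linear, finite-rank, lower order, and driven by the already-convergent sequence $u^M$ are exactly why the argument of \cite{RS-12} carries over unchanged.
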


Combining Lemma \ref{lem 03}, Lemma \ref{lem 04} and Lemma \ref{lem 02} and applying Fatou's lemma, we have
\begin{prp}\label{prop 02} Assume that {\bf (H0)} and {\bf (H1)} hold. There exist $\varpi>0$ and $\kappa_0>0$ such that
\begin{eqnarray}\label{eq:04}
& &E\Big(\Big(e^t \|r(t)\|^2_\mathbb{V}+\int_0^te^s \|r(s)\|^2_\mathbb{V}ds\Big)^\varpi\Big)\nonumber\\
&\leq&
2\|r(0)\|^{2\varpi}_\mathbb{V}\exp(\kappa_0\|u_0\|^2_\mathbb{W})
\end{eqnarray}
where $r(t)=\tilde{u}(t)-u(t)$.

\end{prp}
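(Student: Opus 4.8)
The plan is to deduce Proposition \ref{prop 02} from Lemma \ref{lem 02} by passing to the limit $M\to\infty$ in the estimate \eqref{eq: 03}, using the convergence results of Lemma \ref{lem 03} and Lemma \ref{lem 04} together with Fatou's lemma. First I would recall that, by definition, $r_M(t)=\tilde u^M(t)-u^M(t)$ and $r(t)=\tilde u(t)-u(t)$, and that the initial data satisfy $\tilde u^M(0)=P_M\tilde u_0\to\tilde u_0$ and $u^M(0)=P_M u_0\to u_0$ in $\mathbb{W}$ (hence in $\mathbb{V}$), so that $\|r_M(0)\|_\mathbb{V}\to\|r(0)\|_\mathbb{V}$ and $\|u^M(0)\|^2_\mathbb{W}\to\|u_0\|^2_\mathbb{W}$; in particular the right-hand side of \eqref{eq: 03} converges to the right-hand side of \eqref{eq:04}.

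Next, for the left-hand side, I would combine Lemma \ref{lem 03} and Lemma \ref{lem 04}: since $\|u^M(t)-u(t)\|^2_\mathbb{V}\to0$ and $\|\tilde u^M(t)-\tilde u(t)\|^2_\mathbb{V}\to0$ in $L^1(\Omega)$ for each fixed $t>0$, and likewise $\int_0^T\|u^M(t)-u(t)\|^2_\mathbb{V}dt\to0$ and $\int_0^T\|\tilde u^M(t)-\tilde u(t)\|^2_\mathbb{V}dt\to0$ in $L^1(\Omega)$, we obtain $\|r_M(t)-r(t)\|^2_\mathbb{V}\to0$ in $L^1(\Omega)$ and $\int_0^t\|r_M(s)-r(s)\|^2_\mathbb{V}ds\to0$ in $L^1(\Omega)$. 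Passing to a subsequence, these convergences hold $P$-a.s., so that along that subsequence
\[
e^t\|r_M(t)\|^2_\mathbb{V}+\int_0^te^s\|r_M(s)\|^2_\mathbb{V}ds
\longrightarrow
e^t\|r(t)\|^2_\mathbb{V}+\int_0^te^s\|r(s)\|^2_\mathbb{V}ds,\quad P\text{-a.s.},
\]
using that $e^s$ is bounded on $[0,t]$. Raising to the power $\varpi>0$ and applying Fatou's lemma gives
\[
E\Big(\Big(e^t\|r(t)\|^2_\mathbb{V}+\int_0^te^s\|r(s)\|^2_\mathbb{V}ds\Big)^\varpi\Big)
\leq
\liminf_{M\to\infty}
E\Big(\Big(e^t\|r_M(t)\|^2_\mathbb{V}+\int_0^te^s\|r_M(s)\|^2_\mathbb{V}ds\Big)^\varpi\Big),
\]
and the right-hand side is bounded by $\liminf_M 2\|r_M(0)\|^{2\varpi}_\mathbb{V}\exp(\kappa_0\|u^M(0)\|^2_\mathbb{W})=2\|r(0)\|^{2\varpi}_\mathbb{V}\exp(\kappa_0\|u_0\|^2_\mathbb{W})$ by \eqref{eq: 03} and the convergence of the data. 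This is exactly \eqref{eq:04}, and since the bound holds for each fixed $t\geq0$ it holds for all $t\geq0$.

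The one point requiring a little care — and the main obstacle — is the interchange of the $M\to\infty$ limit with the almost-sure convergence when extracting subsequences: the $L^1(\Omega)$ convergences in Lemma \ref{lem 03} and Lemma \ref{lem 04} only give a.s. convergence along a subsequence, and the subsequence may a priori depend on $t$. This is harmless because Fatou's lemma only needs the liminf along one subsequence, and \eqref{eq: 03} already holds uniformly in $M$; alternatively one can fix a countable dense set of times, diagonalize, and use continuity in $\mathbb{V}$ of $t\mapsto r(t)$ (which follows from Theorem \ref{Solution Existence}(2)) to upgrade to all $t$. A second minor point is justifying $u^M(0)\to u_0$ in $\mathbb{W}$: this is the statement that the Galerkin projections $P_M$ converge strongly to the identity on $\mathbb{W}$, which holds because $\{e_i\}$ is an orthonormal basis of $\mathbb{W}$ by construction, so $\|P_Mu_0-u_0\|_\mathbb{W}\to0$ for every $u_0\in\mathbb{W}$. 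With these observations the proposition follows.
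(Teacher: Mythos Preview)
Your proposal is correct and follows exactly the route the paper indicates: the paper's proof consists of the single sentence ``Combining Lemma \ref{lem 03}, Lemma \ref{lem 04} and Lemma \ref{lem 02} and applying Fatou's lemma,'' and you have faithfully expanded that outline, including the subsequence extraction and the convergence of the initial data that the paper leaves implicit. Your treatment of the two technical points (the $t$-dependence of the subsequence and the $\mathbb{W}$-convergence of $P_Mu_0$) is sound and in fact more careful than what the paper provides.
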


%Assume that \textbf{H0} holds. There exist $\varpi>0$, $C>0$ such that
%\begin{eqnarray*}
%& &E\Big(\Big(e^t \|r(t)\|^2_\mathbb{V}+\int_0^te^s \|r(s)\|^2_\mathbb{V}ds\Big)^\varpi\Big)\\
%&\leq&
%2\|r(0)\|^2_\mathbb{V}\exp(\kappa_0\|u_0\|^2_\mathbb{W})
%\end{eqnarray*}
%where $r(t)=\tilde{u}(t)-u(t)$.

\subsection{Completion of the proof}

Now we verify the assumptions \textbf{(A)-(D)} of Theorem \ref{Thm biaozhun}. Denote the solution of (\ref{eq u tilde}) by $\tilde{u}(t,W,u_0,\tilde{u}_0)$.
Set \begin{eqnarray}\label{eq tilde X}
\widetilde{X}(t,W,x_0,\tilde{x}_0)=
\left\{
 \begin{array}{ll}
 & \hbox{$\tilde{u}(t,W,x_0,\tilde{x}_0),\ \ \ \tilde{x}_0\in\mathbb{W}$;} \\
 & \hbox{$\tilde{x}_0,\ \ \ \tilde{x}_0\in\mathbb{V}/\mathbb{W}$,}
 \end{array}
\right.
\end{eqnarray}
and recall (\ref{X}). We set $(\Upsilon(t),\widetilde{\Upsilon}(t))=(X(t,W,x_0),\widetilde{X}(t,W,x_0,\tilde{x}_0))$. Then
$\textbf{(A)}$ is a consequence of the well-posedness of the equations.

We set $\mathcal{H}=\|\cdot\|^2_\mathbb{W}$. By Proposition \ref{Prop 01}, we have $\textbf{(B)}$.
Let $h(u_0,u_1)=-g(u_1)\rho P_N(u_1-u_0)$ and recall \textbf{(H1)}, then $\textbf{(C)}$ holds.
\vskip 0.3cm

By \textbf{(H1)}, we have
\begin{enumerate}
  \item \begin{eqnarray*}
& &P\Big(|\Upsilon(t,W_2,x_0^2)-\Upsilon(t,W_1,x_0^1)|_{\mathbb{V}}\geq Ce^{-{\gamma_0} t}, \widetilde{\Upsilon}(\cdot,W_1,x_0^1,x_0^2)=\Upsilon(\cdot,W_2,x_0^2)\ on\ [0,t]\Big)\\
&&=
P\big(\|r(t)\|_\mathbb{V}\geq Ce^{-{\gamma_0} t}\Big);
\end{eqnarray*}

  \item for any $t_0\geq 0$ and any stopping time $\tau\geq t_0$,
\begin{eqnarray*}
&&P\Big(\int_{t_0}^\tau|h(t)|^2_Udt\geq Ce^{-{\gamma_0} t_0}\ and\ \widetilde{\Upsilon}(\cdot,W_1,x_0^1,x_0^2)=\Upsilon(\cdot,W_2,x_0^2)\ on\ [0,\tau]\Big)\\
&&
\leq
P\Big(\int_{t_0}^\tau\|r(t)\|_\mathbb{V}^2dt\geq \widetilde{C}e^{-{\gamma_0} t_0}\Big);
\end{eqnarray*}

  \item $$
P\Big(\int_{0}^{+\infty}|h(t)|^2_Udt\leq C\Big)\geq P\Big(\int_{0}^{+\infty}\|r(t)\|^2_\mathbb{V}dt\leq \widetilde{C}\Big).
$$
\end{enumerate}
By Chebyshev inequality, we deduce $\textbf{(D)}$ directly from Proposition \ref{prop 02}.

Thus, we can apply Theorem \ref{Thm biaozhun} to obtain
our main result Theorem \ref{thm main}.

\vskip0.5cm {\small {\bf  Acknowledgements}\ \  This work was supported by National Natural Science Foundation of China (NSFC) (No. 11431014, No. 11301498, No. 11401557)}, and the Fundamental Research Funds for the Central Universities (No. WK 0010000033).

%%%%%%%%%%%%%%%%%%%%%%%%%%%%%%%%%%%%%%%%%%%%%%%%%%%%%%%%%%%%%%%%%%%%%%%%%%%%%%%%%%%%%%%%%%%%%%%%%%%%%%%%%%%%%%%

\def\refname{ References}


\begin{thebibliography}{2}

\bibitem{Bernard} J.M. Bernard, \emph{Weak and classical solutions of equations of motion for second grade fluids,} Comm. Appl. Nonlinear
Anal. 5(4) (1998)1-32

%\bibitem{Budhiraja-Dupuis} A. Budhiraja,  P. Dupuis, \emph{A variational representation for positive functionals of infinite dimensional Brownian motion},  Probab. Math. Statist. 20(2000), 39-61

\bibitem{Busuioc}A. V. Busuioc, \emph{On second grade fluids with vanishing viscosity,} Comptes Rendus de $l'Acad\acute{e}mie$
des Sciences. $S\acute{e}rie$ I, vol. 328, no. 12, 1241-1246, 1999.

\bibitem{Busuioc-Ratiu} A. V. Busuioc, T.S. Ratiu, \emph{The second grade fluid and averaged Euler equations with Navier-slip
boundary conditions,} Nonlinearity, vol. 16, no. 3, 1119-1149, 2003.

%\bibitem{CW}  C. Cardon-Weber, \emph{ Large deviations for a Burgers'-type SPDE},
%Stochastic processes and their applications, 84 (1999), 53-70.

%\bibitem{CR}  S. Cerrai and M. R\"{o}ckner, \emph{ Large deviations for stochastic reaction-diffusion systems with multiplicative
%noise and non-Lipschtiz reaction term}, Ann. Probab. 32 (2004), 1100-1139.
%
%\bibitem{CM2}  F. Chenal and A. Millet, \emph{ Uniform large deviations for
%parabolic SPDEs and applications}, Stochastic Process. Appl. 72(1997), 161-186.

\bibitem{CG} D. Cioranescu, V. Girault, \emph{Weak and classical solutions of a family of second grade fluids,}
International Journal of Non-Linear Mechanics, vol. 32, no.2, pp. 317-335, 1997.

\bibitem{CE} D. Cioranescu, O. El Hancene, \emph{Existence and uniqueness for fluids of second grade}, in Nonlinear
Partial Differential Equations and Their Applications. College de France semminar, vol. 109, pp. 178-197, Pitman,
Boston, Mass, USA, 1984.

\bibitem{CO} D. Cioranescu, E.H. Ouazar, \emph{Existence and uniqueness for fluids of second grade}, in: Nonlinear Partial Differential
Equations, in: College de france Seminar, Pitman, vol. 109, 1984, pp. 178-197

%\bibitem{DF1974} J.E. Dunn, R.L. Fosdick, \emph{Thermodynamics, stability, and boundedness of fluids of complexity two
%and fluids of second grade,} Archive for Rational Mechanics and Analysis, Vol. 56, 191-252, 1974

%\bibitem{Dembo-Zeitouni} A. Dembo, O. Zeitouni, \emph{Large Deviations Techniques and Applications,} 2nd ed., Springer, New York, 1998.


\bibitem{Dunn-Fosdick} J.E. Dunn, R.L. Fosdick, \emph{Thermodynamics, stability and boundedness of fluids of complexity two and
fluids of second grade,} Arch. Ration. Mech. Anal. 56(3), 191-252, 1974.

\bibitem{Dunn-Rajagopal} J.E. Dunn, K.R. Rajagopal, \emph{Fluids of differential type: critical review and thermodynamic analysis,}
Internat. J. Engrg. Sci. 33(5), 668-729, 1995.

\bibitem{Fosdick-Rajagopal}R.L. Fosdick, K.R. Rajagopal, \emph{Anomalous features in the model of second grade fluids,} Arch. Ration. Mech. Anal.
70, 145-152, 1978.

\bibitem{HRS} E. Hausenblas, P.A. Razafimandimby, M. Sango, \emph{Martingale solution to equations for differential type fluids of grade two driven by random force of Levy type}, potential Anal., 38, no. 4, 1291-1331, 2013.

    \bibitem{Holm-Marsden-Ratiu} D.D. Holm, J.E. Marsden, T.S. Ratiu, \emph{$Euler-Poincar\acute{e}$ models of ideal fluids with nonlinear
dispersion,} Physical Review Letters, vol. 80, no.19, 4173-4176, 1998.

\bibitem{Holm-Marsden-Ratiu01} D.D. Holm, J.E. Marsden, T.S. Ratiu, \emph{The $Euler-Poincar\acute{e}$ equations and semidirect
products with applications to continuum theories,} Advances in Mathematics, vol. 137, no. 1, 1-81,1998.

    \bibitem{Iftimie} D. Iftimie, \emph{Remarques sur la limite $\alpha\rightarrow0$ pour les fluides de grade 2}, Comptes
Rendus $Math\acute{e}matique$ $Acad\acute{e}mie$ des Sciences. Paris. Series I, vol. 334, no. 1, 83-86, 2002.

%    \bibitem{Liu}
%W. Liu, \emph{Large deviations for stochastic evolution equations with small multiplicative noise}, App. Math. Opt. 61:1(2010)27-56.

%\bibitem{Lions}
%J. L. Lions, \emph{Quelques Methodes de Resolution des Problemes aux Limites non Lineaires}, Dunod, Paris, 1969.

\bibitem{Noll-truesdell} W. Noll, C. Truesdell, \emph{The Nonlinear Field Theory of Mechanics,} in: Handbuch der Physik, vol. III,
Springer-Verlag, Berlin, 1975.

\bibitem{Odasso} C. Odasso, \emph{Exponential mixing for stochastic PDEs: the non-additive case,}
Probab. Theory Relat. Fields, 140, 41-82, 2008


\bibitem{RS-12} P.A. Razafimandimby, M. Sango, \emph{Strong solution for a stochastic model of two-dimensional second grade fluids:
Existence, uniqueness and asymptotic behavior}, Nonlinear Analysis 75 (2012)4251-4270.

\bibitem{RS-10-01}P.A. Razafimandimby, M. Sango, \emph{Asymptotic behaviour of solutions of stochastic evolution equations for second
grade fluids,} C.R. Acad. Sci. Paris. Ser. I 348(13-14), 787-790.

\bibitem{RS-10} P.A. Razafimandimby, M. Sango, \emph{Weak solution of a stochastic model of two-dimensional second grade fluids},
Bound. Value Probl. (2010) 47

\bibitem{Shkoller2001} S. Shkoller, \emph{Smooth global Lagrangian flow for the 2D Euler and second-grade fluid equations,} Applied
Mathematics Letters, vol. 14, no. 5, 539-543, 2001.

\bibitem{Shkoller1998} S. Shkoller, \emph{Geometry and curvature of diffeomorphism groups with $H^1$ metric and mean hydrodynamics,}
Journal of Functional Analysis, vol. 160, no. 1, 337-365, 1998.

\bibitem{SO1} V.A. Solonnikov, \emph{On general boundary problems for systems which are elliptic in the sense of A. Douglis and L.
Nirenberg. I}, Amer. Math. Soc. Transl. 56(1966)193-232.

\bibitem{SO2} V.A. Solonnikov, \emph{On general boundary problems for systems which are elliptic in the sense of A. Douglis and L.
Nirenberg. II}, Pro. Steklov Inst. Math. 92(1968)269-339.


%\bibitem{S} R.B. Sowers, \emph{ Large deviations for a
%reaction-diffusion equation with non-Gaussian perturbations}, Ann. Probab. 20 (1992), 504-537.
%
%
%
%
%
%\bibitem{Temam} R. Temam, \emph{Navier-Stokes Equations Theory and Numerical Analysis}, North-Holland Pub. Company,
% Second revised edition,(1979).
%
%\bibitem{Temam 1983}
%R. Temam, \emph{Navier-Stokes Equations and Nonlinear Functional Analysis.} SIAM, Philadelphia, 1983
%
%
%\bibitem{Touchette} H. Touchette, \emph{The large deviation approach to statistical mechanics,} Phys. Rep. 478, 1-69, 2009.
%
%
%
%\bibitem{Z} T.S. Zhang, \emph{On small time asymptotics of diffusions on Hilbert spaces}, Ann.Probab. 28 (2002), 537-557.




















\end{thebibliography}
\end{document}